\documentclass{amsart}
\usepackage{graphicx}
\usepackage{amscd}
\usepackage{color}
\usepackage{epstopdf}
\usepackage{pinlabel}
\sloppy

\newtheorem{theorem}{Theorem}[section]
\newtheorem{lemma}[theorem]{Lemma}

\newtheorem{corollary}[theorem]{Corollary}
\newtheorem{conjecture}[theorem]{Conjecture}
\newtheorem{question}[theorem]{Question}

\theoremstyle{definition}
\newtheorem{definition}[theorem]{Definition}

\begin{document}

\title[The incompatibility of crossing number and bridge number]{The incompatibility of crossing number and bridge number for knot diagrams}

\author{Ryan Blair}
\address{Department of Mathematics, California State University, Long Beach, 1250 Bellflower Blvd, Long Beach, CA 90840}
\email{ryan.blair@csulb.edu}

\author{Alexandra A. Kjuchukova}
\address{Department of Mathematics, University of Wisconsin-Madison, Van Vleck Hall, 480 Lincoln Drive, Madison, WI 53706}
\email{kjuchukova@math.wisc.edu}

\author{Makoto Ozawa}
\address{Department of Natural Sciences, Faculty of Arts and Sciences, Komazawa University, 1-23-1 Komazawa, Setagaya-ku, Tokyo, 154-8525, Japan}
\email{w3c@komazawa-u.ac.jp}
\thanks{The third author is partially supported by Grant-in-Aid for Scientific Research (C) (No. 26400097 \& 17K05262), The Ministry of Education, Culture, Sports, Science and Technology, Japan}

\subjclass[2010]{Primary 57M25; Secondary 57M27}

\keywords{knot, knot diagram, crossing number, bridge number, perpendicular bridge number, Wirtinger number}

\begin{abstract}
We define and compare several natural ways to compute the bridge number of a knot diagram. We study bridge numbers of crossing number minimizing diagrams, as well as the behavior of diagrammatic bridge numbers under the connected sum operation. For each notion of diagrammatic bridge number considered, we find crossing number minimizing knot diagrams which fail to minimize bridge number. Furthermore, we construct a family of minimal crossing diagrams for which the difference between diagrammatic bridge number and the actual bridge number of the knot grows to infinity.
\end{abstract}

\maketitle


\section{Introduction}

Let $K$ be a knot-type in $\mathbb{R}^3$ and let $\gamma\in K$ be a smooth embedding of the knot-type $K$. Let $p:\mathbb{R}^3\rightarrow \mathbb{R}^2$ be given by $p(x,y,z):=(x,y)$,  $h:\mathbb{R}^3\rightarrow \mathbb{R}$ by $h(x,y,z):=z$ and $h_{||}:\mathbb{R}^3\rightarrow \mathbb{R}$ by $h_{||}(x,y,z)=y$.

Denote by  $\mathcal{C}(K)$ the set of embeddings $\gamma\in K$ which are regular and have minimal crossing number with respect to $p$; namely, these are crossing number minimizing embeddings.
Denote by $\mathcal{B}(K)$ the set of embeddings $\gamma\in K$ which are Morse and have the minimal number of maximal points with respect to $h$; namely, these are bridge number minimizing embeddings.

We have experimentally verified in \cite{Ksite} that $\mathcal{C}(K)\cap \mathcal{B}(K)\neq\emptyset$ for at least 450,000 prime knots of up to 16 crossings. Is this true for all knots? That is, we ask:

\begin{question}
Can we extract the bridge number of a knot $K$ from a minimal crossing diagram of $K$?
\end{question}

In order to study this question, we introduce several diagrammatic notions of bridge number. These are integers associated to knot diagrams with the property that the minimum of the bridge number of $D$ over all diagrams $D$ of a given knot $K$ is equal to the bridge number of $K$. 

A classical notion of diagrammatic bridge number is that of ``overpass'' bridge number. Call an arc in knot diagram a {\it bridge} if it includes at least one overcrossing. The overpass bridge number of the diagram is the number of bridges in the diagram. It is well-known that this definition of diagrammatic bridge number is not well behaved with respect to minimal crossing number diagrams. For instance, the trefoil knot has bridge number two and crossing number three. However, it is a straight-forward exercise to show that every diagram of the trefoil with overpass bridge number two contains at least four crossings. 
More generally, we show that the minimal overpass bridge number and the minimal crossing number are wholly incompatible.

\begin{theorem}\label{thm:overpass}
Let $K$ be a non-trivial knot. $K$ does not admit a diagram which realizes both the minimal overpass bridge number and the minimal crossing number of $K$.
\end{theorem}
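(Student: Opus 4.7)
My plan is to argue by contradiction. Suppose $D$ is a diagram of $K$ with $c(D)=c(K)$ and overpass bridge number $b_o(D)=b(K)$. I first observe that $c(K)>b(K)$ for any non-trivial $K$: indeed, if $b(K)=c(K)=n$, then $D$ satisfies $n=b(K)\leq b_o(D)\leq c(D)=n$, forcing $b_o(D)=c(D)$, so every arc of $D$ contains exactly one overcrossing and $D$ is alternating. Then $K$ is alternating with $b(K)=c(K)$, which contradicts standard bounds on bridge and crossing numbers for non-trivial alternating knots.

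Hence $c(D)>b_o(D)$, so the arc decomposition of $D$ contains at least one \emph{empty arc} $\gamma$: an arc between consecutive undercrossings whose interior contains no overcrossings. I then split into two cases according to whether $K$ is alternating. If $K$ is alternating, then by the Kauffman--Murasugi--Thistlethwaite theorem every minimum-crossing diagram of $K$ is alternating; in particular $D$ is alternating, so every arc of $D$ contains exactly one overcrossing and $b_o(D)=c(D)=c(K)$. Combined with $b_o(D)=b(K)$ this forces $c(K)=b(K)$, contradicting the previous paragraph.

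If $K$ is non-alternating, then $D$ too is non-alternating. Let $\gamma$ have endpoints $u_1,u_2$ and let $\beta_1,\beta_2$ be the bridges passing over at $u_1,u_2$ respectively. If $\beta_1=\beta_2=\beta$ and the sub-arc $\delta\subset\beta$ connecting $u_1$ to $u_2$ has no further crossings, then $\gamma\cup\delta$ bounds a bigon to which Reidemeister II applies, reducing the crossing count by $2$ and contradicting $c(D)=c(K)$. In the remaining sub-cases---when $\delta$ contains internal crossings, or when $\beta_1\neq\beta_2$---the plan is to apply an innermost-bigon argument supplemented by crossing-preserving Reidemeister III moves or a flype-type rotation, reducing to the simple bigon configuration and obtaining the same contradiction.

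The main obstacle I anticipate is this last non-alternating case, particularly when $\beta_1\neq\beta_2$. A fully rigorous treatment likely requires either a careful combinatorial analysis of the 4-valent plane graph underlying $D$, or an invariant-theoretic lower bound on $b_o(D)$ (for instance via the Wirtinger number or meridional rank of $D$ alluded to in the keywords) establishing $b_o(D)>b(K)$ whenever $c(D)=c(K)$ and $K$ is non-trivial. The trefoil argument sketched in the introduction is exactly the base case of such an analysis, where the structural rigidity of three-crossing shadows forces any overpass-two assignment to yield the unknot.
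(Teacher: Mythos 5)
Your proposal has a genuine gap exactly where you flag one: the non-alternating case with $\beta_1\neq\beta_2$ (and the sub-case where $\delta$ carries internal crossings) is never actually handled. You propose ``an innermost-bigon argument supplemented by crossing-preserving Reidemeister III moves or a flype-type rotation,'' but you give no argument that such moves exist or terminate in the simple bigon configuration; an empty arc whose two endpoints lie under different bridges need not cobound a bigon with anything, so Reidemeister II is simply unavailable and it is unclear what configuration you are reducing to. This is precisely where the content of the theorem lives, so as written the argument only settles the alternating case. A secondary issue is your opening claim that $c(K)>b(K)$ for every non-trivial knot: you reduce it to the case of an alternating knot with $b(K)=c(K)$ and then appeal to unspecified ``standard bounds,'' which is an assertion, not a proof, and you should either cite a precise inequality or derive one.

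The paper's route avoids this case analysis entirely and never isolates an ``empty arc.'' It decomposes $D$ into its alternating sequence of over- and underpasses $\alpha_1^+,\alpha_1^-,\dots,\alpha_n^+,\alpha_n^-$ and proves two short lemmas about a pair of \emph{consecutive} passes. First, if some consecutive over/underpass pair shares no crossing, a local move shortens the pass decomposition, strictly decreasing the overpass bridge number; hence in a diagram realizing the minimal overpass bridge number every consecutive pair meets in at least one crossing. Second, if a consecutive pair does meet in a crossing, then, because the two passes share an endpoint and one runs entirely over while the other runs entirely under, the end of the overpass can be retracted back through those shared crossings, strictly decreasing the crossing number; hence in a crossing-minimal diagram no consecutive pair meets in a crossing. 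For a non-trivial knot these two conclusions cannot hold simultaneously, which is the theorem. If you want to salvage your approach, the missing idea is to organize the argument around consecutive passes in the overpass decomposition, where the over/under separation makes both reduction moves immediate, rather than around bigons between an empty arc and the bridges above its endpoints.
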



Alternatively, the typical diagrammatic depiction of knots obtaining their bridge number suggests defining the ``parallel'' bridge number of a diagram $D$, $b_{||}(D)$, as the minimal number of maxima of $h_{||}|_{\gamma}$ for any Morse embedding $\gamma$ that projects to $D$. We remark that this definition of diagrammatic bridge number is not very effective at calculating bridge number of a knot.  

We focus instead on the perpendicular bridge number of a knot diagram $D$, $b_{\perp}(D)$ (Definition~\ref{perp}), and the Wirtinger number of  $D$, $\omega(D)$ (Definition~\ref{omega}).  The Wirtinger number is defined combinatorially, and the fact that the minimum value of $\omega(D)$ over all diagrams $D$ of a knot $K$ equals the bridge number of $K$ is non-trivial -- see~\cite{BKVV} for a proof. On the plus side, the Wirtinger number is algorthmically computable. This allowed the detection of bridge numbers for nearly half a million knots from minimal-crossing diagrams, and the tabulation of bridge numbers for the majority of these knots for the first time. By contrast, $b_{\perp}(D)$ is defined geometrically, and it follows easily that taking the minimum of $b_{\perp}(D)$ over all diagrams of a knot gives the bridge number, but $b_{\perp}(D)$ can be challenging to compute directly. Additionally, it is a straight forward exercise to show that $b_{\perp}(D)\leq b_{||}(D)$ for all knot diagrams, making $b_{\perp}(D)$ more effective at calculating the bridge number of a knot from one of its diagrams. Our next result relates $\omega(D)$ and $b_{\perp}(D)$.

\begin{theorem}
\label{omega=perp}
For a knot diagram $D$, $\omega(D)=b_{\perp}(D)$.
\end{theorem}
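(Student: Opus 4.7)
The plan is to prove $\omega(D) = b_\perp(D)$ by establishing the two inequalities separately.

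For $\omega(D) \le b_\perp(D)$, I would begin with a Morse lift $\gamma$ of $D$ realizing $b_\perp(D)$ local maxima of $h$, perturbed if necessary to be generic (so no critical point of $h|_\gamma$ lies over a crossing of $D$). Let $S$ be the set of arcs of $D$ whose interior contains at least one local maximum of $h|_\gamma$; since each maximum lies on some arc, $|S| \le b_\perp(D)$. The goal is then to show $S$ is a Wirtinger seed set. I would process the non-seed arcs in decreasing order of $\max h|_\alpha$. For such an $\alpha$, the absence of an interior maximum forces $h|_\alpha$ to attain its maximum at an endpoint undercrossing $c$ of $\alpha$. At $c$, the overstrand $\beta$ has strictly greater $z$, so $\max h|_\beta > \max h|_\alpha$; and since by genericity the understrand passes through $c$ without a critical point, the other understrand $\alpha'$ rises past $h|_\alpha(c)$ just on the far side of $c$, giving $\max h|_{\alpha'} > \max h|_\alpha$. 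Hence both $\beta$ and $\alpha'$ are colored before $\alpha$ is reached, and the Wirtinger move at $c$ colors $\alpha$. Therefore $\omega(D) \le |S| \le b_\perp(D)$.

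For $b_\perp(D) \le \omega(D)$, starting from a Wirtinger seed set $S$ of minimum size, the plan is to construct a Morse lift of $D$ with exactly $|S|$ maxima. I would fix a Wirtinger processing order $\alpha_1, \ldots, \alpha_n$ with seeds first, and for each non-seed $\alpha_i$ designate a Wirtinger crossing $c_i$ at which the overstrand $\beta_i$ and the other understrand $\alpha_{k_i}$ have both already been processed. Then I would build the lift iteratively: each seed arc is realized as a ``bridge'' carrying a single interior maximum at its own distinct high level and descending to both of its endpoint undercrossings; each non-seed $\alpha_i$ is attached at $c_i$ by continuing the understrand of the already-lifted partner $\alpha_{k_i}$ at the matching height, and then routed to $\alpha_i$'s other endpoint as a monotone descent (or a valley with a single interior minimum), so as to introduce no new interior maximum. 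If the heights can be chosen consistently, each seed contributes exactly one maximum and each non-seed contributes none, giving $|S|$ maxima in total.

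The main obstacle is carrying out this construction consistently. The heights chosen at the crossings must simultaneously satisfy (i) overstrand strictly above understrand at every crossing; (ii) agreement of the two understrand arcs at each shared undercrossing; and (iii) no unintended local maximum, neither in the interior of a non-seed arc (which may itself serve as overstrand at crossings within its interior) nor at an undercrossing where both meeting arcs happen to descend from the crossing. A naive ``one slab per arc, ordered by Wirtinger step'' scheme fails, as the trefoil already demonstrates: any arc labelled non-seed must still serve as overstrand at some crossing and so cannot lie uniformly below every seed. My plan is instead to assign heights at each crossing individually, using the Wirtinger recursion so that at each $c_i$ the already-committed, higher-level arcs $\beta_i$ and $\alpha_{k_i}$ force the correct descent profile on $\alpha_i$, and then to verify that the resulting height system admits a smooth Morse lift with exactly $|S|$ maxima. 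Carrying out this verification is the technical heart of the proof.
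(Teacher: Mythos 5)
Your first inequality, $\omega(D)\le b_{\perp}(D)$, is essentially correct and in fact streamlines the paper's argument. The paper first proves a normalization lemma (Lemma~\ref{critical}) forcing all minima of a minimizing lift to lie at undercrossing points and each strand to carry at most one maximum, and then defines the coloring globally via the components of $\gamma$ cut along its minima. You bypass most of this: after a generic perturbation, a strand with no interior maximum attains its maximum at an undercrossing endpoint, where both the overstrand and the continuing understrand have strictly larger maximum, so processing strands in decreasing order of $\max h$ yields valid coloring moves from the seed set $S$ of strands containing maxima, and $|S|$ is at most the number of maxima since the interiors of the lifted strands are disjoint. This greedy version is sound.

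The genuine gap is in the reverse inequality $b_{\perp}(D)\le\omega(D)$. You correctly diagnose that a naive ``one height level per arc, ordered by coloring step'' lift fails (a non-seed arc may still be the overstrand at some crossing), and you propose assigning heights crossing-by-crossing along the Wirtinger recursion --- but you then explicitly defer the verification that this height data is consistent and yields a Morse embedding presenting $D$ with exactly $|S|$ maxima. That verification is not a routine check; it is the entire content of this direction, and as written your proposal establishes only one of the two inequalities. The paper handles this direction by invoking the construction from the main theorem of \cite{BKVV}, which, given a $\mu$-coloring of $D$, builds an embedding presenting $D$ itself with exactly $\mu$ maxima by induction on the coloring sequence: each seed strand is realized as a bridge at its own level, and each subsequently colored strand is appended below the already-built portion without introducing a new maximum. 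If you do not cite that result, you must carry out this induction in full, which is precisely the step you have labeled ``the technical heart'' and left open.
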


In light of this, we will sometimes use $b(D)$ to denote either of these quantities for a knot diagram, that is, $b(D):=\omega(D)=b_{\perp}(D)$. Throughout, $\beta(K)$ denotes the bridge number of $K$. 
We leverage the above equality to prove the following.

\begin{theorem}
\label{minmal-gap}
 For every positive integer $n$, there exists an alternating knot $K$ and a crossing number minimizing diagram $D$ of $K$, with the property that $b(D) - \beta(K) \geq n$. 
\end{theorem}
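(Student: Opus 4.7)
The plan is to produce, for each positive integer $n$, an alternating knot $K_n$ together with a reduced alternating diagram $D_n$ for which $b(D_n) - \beta(K_n) \ge n$. Since a reduced alternating diagram realizes the crossing number of the underlying knot by Kauffman--Murasugi--Thistlethwaite, having $D_n$ reduced automatically places it in $\mathcal{C}(K_n)$, so the entire content of the statement lies in producing the Wirtinger/perpendicular-bridge gap. The natural candidates are alternating knots whose reduced alternating diagrams contain many essentially independent ``tangle blocks'' while the bridge number of $K_n$ is controlled: for instance, alternating Montesinos knots with many tangles, alternating pretzel knots, or non-standard reduced alternating diagrams of alternating connected sums. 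For any such candidate, $\beta(K_n)$ can be bounded above either by exhibiting an explicit Morse embedding or by invoking standard bridge-number calculations such as Schubert's for $2$-bridge knots and connected sums or Boileau--Zieschang's for Montesinos knots.

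The main step is the lower bound $b(D_n) \ge \beta(K_n) + n$. Here I would exploit Theorem~\ref{omega=perp} and argue geometrically in the perpendicular model: within each block of $D_n$, a local lemma should force at least one additional $h_{||}$-maximum in every Morse embedding projecting to $D_n$, and by laying the blocks out so that they are separated in the $y$-direction these extra local maxima cannot be identified across blocks or with the global bridge maxima forced by $\beta(K_n)$. The local forcing is expected to come from the alternating over/under pattern within a block obstructing a monotone $h_{||}$-profile across it; summed over the $n$ blocks, this yields the desired excess of $n$.

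I expect the local forcing lemma and the independence of the forced maxima across blocks to be the main obstacle. One has to pin down the precise combinatorial feature of a block that forbids a monotone perpendicular profile for every realizing embedding, and simultaneously arrange the global layout of $D_n$ so that no embedding trick---such as rerouting a strand through a distant part of the diagram---can economize by sharing a single $h_{||}$-maximum between several blocks. The equality $\omega = b_\perp$ established in Theorem~\ref{omega=perp} is essential here, since the geometric perpendicular viewpoint is much better suited to forcing maxima than the purely combinatorial Wirtinger viewpoint.
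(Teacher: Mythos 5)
Your plan has a genuine gap at its core, and you name it yourself: the ``local forcing lemma'' asserting that each tangle block of a reduced alternating diagram costs an extra maximum in every presenting embedding. Nothing in your proposal supplies this, and for your primary candidates --- prime alternating Montesinos and pretzel knots --- it is expected to be \emph{false}: the paper conjectures that every minimal diagram of a prime alternating knot realizes the bridge number, and every example realizing the gap in Theorem~\ref{minmal-gap} is composite. There is also a conceptual slip in your ``perpendicular model'': $b_{\perp}$ counts maxima of $h$ (the $z$-coordinate, out of the projection plane), not of $h_{||}$, so laying blocks out separated in the $y$-direction does nothing to prevent an embedding from sharing a single $z$-maximum among several blocks. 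The parallel quantity $b_{||}$, for which your separation heuristic is natural, is not the one equated with $\omega$ in Theorem~\ref{omega=perp}.

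The paper's route avoids any local forcing lemma. It starts from a single computer-verified seed: an alternating composite knot $K$ with a reduced alternating (hence crossing-minimizing) diagram $D$ such that $\beta(K)=5$ but $\omega(D)=6$. It then takes $\#_{i=1}^{n} D$, which is again reduced alternating and hence minimal. Schubert's \emph{equality} gives $\beta(\#_{i=1}^{n} K)=5n-(n-1)$, while Theorem~\ref{add} (super-additivity of $b_{\perp}=\omega$ under diagrammatic connected sum, proved by surgering a presenting embedding along a monotone arc lying over the summing circle) gives only the inequality $\omega(\#_{i=1}^{n} D)\geq 6n-(n-1)$. The gap of $n$ comes from the mismatch between the exact $-1$ per summand on the $\beta$ side and the at-least $-1$ per summand on the $\omega$ side, amplified by the seed's gap of $1$. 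To salvage your approach you would need either to produce and verify a concrete seed diagram with $b(D)>\beta(K)$ (the paper needed a computer computation of $\omega$ for this), or to prove the per-block forcing statement, which for prime alternating diagrams runs against the paper's own conjecture.
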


This theorem shows that not all minimal diagrams have Wirtinger number equal to the bridge number. However, all of the examples constructed in the proof of Theorem \ref{minmal-gap} have the property that $K$ is a composite knot and that there exists an alternative minimal crossing diagram $D'$ such that $\beta(K)=b_{\perp}(D')$.  In fact, we conjecture that every knot has a minimal diagram realizing bridge number, that is,

\begin{conjecture}
For any knot $K$, $\mathcal{C}(K)\cap\mathcal{B}(K)\ne\emptyset$.
\end{conjecture}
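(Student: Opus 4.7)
The plan is to reduce to the prime case and then attack prime knots by exploiting the structure of their crossing-minimizing diagrams. For the reduction, suppose $K = K_1 \# \cdots \# K_n$ is the prime decomposition. If each factor admits a diagram $D_i$ with $c(D_i)=c(K_i)$ and $b(D_i)=\beta(K_i)$, then performing the diagrammatic connected sum $D = D_1 \# \cdots \# D_n$ along carefully chosen arcs yields $c(D) = \sum c(D_i)$ and $b(D) \leq \sum b(D_i) - (n-1)$; the bridge inequality comes from merging a Wirtinger seed strand of one summand with a seed strand of the next. Since $\beta(K) = \sum \beta(K_i) - (n-1)$ by Schubert and $c(K) \leq \sum c(K_i)$ (with equality known for alternating and adequate knots, and conjectured in general), both inequalities become equalities whenever crossing number is additive under connected sum. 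Thus, modulo the crossing-number additivity conjecture, it suffices to establish the conjecture for prime knots.

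For prime knots I would proceed by class. For alternating knots, the stronger statement that \emph{every} reduced alternating diagram $D$ satisfies $b(D) = \beta(K)$ appears plausible: one would construct a Wirtinger labeling adapted to the checkerboard and alternating structure, starting from a well-chosen set of $\beta(K)$ seed arcs and propagating across underpasses using the alternating property; Tait's flyping theorem would then propagate the conclusion across all reduced alternating diagrams. For torus knots $T(p,q)$ with $p \leq q$, the standard braid-closure diagram simultaneously realizes $c = q(p-1)$ and $\beta = p$, with an explicit Wirtinger labeling on $p$ meridional seed strands. Rational (two-bridge) knots can be handled by direct continued-fraction constructions. Adequate knots and positive braid closures seem like the natural next targets, using their diagrammatic rigidity (e.g.\ the constraints imposed by Kauffman states) to constrain Wirtinger labelings.

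The main obstacle is the prime case for less structured knots: hyperbolic non-alternating, non-Montesinos examples, where crossing-minimizing diagrams admit no known classification. The conjecture asks that the geometric invariant $\beta(K)$, defined as a minimum over all smooth embeddings, be witnessed by \emph{some} diagram chosen to minimize a combinatorially defined invariant, and a priori these two optimization problems share no common optimum. Theorem \ref{minmal-gap} shows that not every minimal diagram suffices, so any general proof must either provide an explicit construction across all knot types or identify a set of crossing-preserving moves (richer than flypes) that converts an arbitrary crossing-minimal diagram into one realizing the Wirtinger bound. Identifying such a mechanism is the deepest difficulty, and I would expect partial results in the alternating, adequate, and small-bridge-number settings well before any general theorem, and it remains conceivable that additional hypotheses on $K$ are ultimately required.
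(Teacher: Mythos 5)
The statement you are trying to prove is stated in the paper as an open \emph{conjecture}: the paper offers no proof, only experimental evidence (computations verifying $\mathcal{C}(K)\cap\mathcal{B}(K)\neq\emptyset$ for roughly 450{,}000 knots of up to 16 crossings) together with negative results showing that \emph{not every} minimal diagram realizes bridge number. Your proposal is, by your own admission, also not a proof but a program, so there is a genuine gap in the strongest sense: the prime case is entirely unresolved, and you say so yourself. Beyond that, two concrete problems undercut even the conditional parts of your outline. First, your reduction to the prime case assumes additivity of crossing number under connected sum, itself a famous open conjecture (known only for special classes such as alternating and adequate knots), so the reduction does not actually simplify the problem unconditionally. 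Second, your claimed inequality $b(D)\leq \sum b(D_i)-(n-1)$ for the diagrammatic connected sum is not established: the argument of ``merging a Wirtinger seed strand of one summand with a seed strand of the next'' requires that an optimal coloring of each $D_i$ admit a seed on the specific strand being merged, which is not automatic since the Wirtinger number is a minimum over all seed sets. Note that the paper proves only the \emph{opposite} (super-additive) inequality, Theorem~\ref{add}: $b_{\perp}(D)\geq b_{\perp}(D_1)+b_{\perp}(D_2)-1$.

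A more pointed error: your assertion that ``every reduced alternating diagram $D$ satisfies $b(D)=\beta(K)$'' is refuted by the paper itself. Theorem~\ref{minmal-gap} constructs \emph{alternating} knots with crossing-number-minimizing (hence, by the Tait conjectures, reduced alternating) diagrams $D$ satisfying $b(D)-\beta(K)\geq n$ for any $n$; these examples are composite, which is exactly why the paper's corresponding conjecture is restricted to \emph{prime} alternating knots. You cite Theorem~\ref{minmal-gap} later in your proposal, but it contradicts your alternating-case strategy as stated, so that branch of your program must at minimum be restricted to prime alternating knots, and even there invariance of the Wirtinger number under flypes is only conjectural in the paper. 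In short: nothing in your proposal closes the gap between the two optimization problems (minimizing crossings versus minimizing maxima), which is precisely why the statement remains a conjecture.
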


The above conjecture, if true, together with invariance of the Wirtinger number under flypes, which we conjecture holds, would imply:

\begin{conjecture}
For a minimal diagram $D$ of a prime alternating knot $K$, then $b(D) = \beta(K)$.
\end{conjecture}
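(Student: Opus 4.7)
The plan is to deduce the statement directly from the two preceding conjectures together with the Tait flyping theorem of Menasco--Thistlethwaite. Let $D$ be a minimal crossing diagram of the prime alternating knot $K$. By the Tait conjectures (proved by Kauffman, Murasugi, and Thistlethwaite), any minimal crossing diagram of a prime alternating knot is automatically reduced and alternating, so $D$ is such.

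Next, I would invoke the first conjecture to produce an embedding $\gamma \in \mathcal{C}(K)\cap\mathcal{B}(K)$. Its projection $D':=p(\gamma)$ is again a minimal crossing diagram of $K$, hence also reduced and alternating, and by construction $b_{\perp}(D') = \beta(K)$; Theorem~\ref{omega=perp} then gives $b(D') = \beta(K)$. The flyping theorem now asserts that any two reduced alternating diagrams of a prime alternating knot are connected by a finite sequence of flypes, with every intermediate diagram again a reduced alternating diagram of $K$. Applying this to $D$ and $D'$ produces such a flype sequence between them.

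Finally, I would iterate the second conjecture (flype-invariance of $b$) along this sequence, propagating the equality $b(D')=\beta(K)$ one flype at a time across to $D$, yielding $b(D)=\beta(K)$ as claimed.

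Setting aside the two assumed conjectures themselves, the only substantive obstacle is ensuring that each intermediate diagram appearing in the flype sequence genuinely falls within the scope of the flype-invariance statement; this is built into the Menasco--Thistlethwaite theorem, which keeps us inside the class of reduced alternating diagrams of $K$ throughout. Hence the whole argument reduces to a clean two-line deduction once the two conjectures and the classical Tait machinery are in place.
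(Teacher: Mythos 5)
Your argument matches the paper's own reasoning exactly: the statement you are asked to prove is stated in the paper as a \emph{conjecture}, and the paper offers no proof of it, only the remark that it would follow from the preceding conjecture ($\mathcal{C}(K)\cap\mathcal{B}(K)\ne\emptyset$) together with the conjectured flype-invariance of the Wirtinger number. Your write-up is a correct and careful fleshing-out of that implication (using the Tait results to see that $D$ and $D'$ are both reduced alternating, and Menasco--Thistlethwaite to connect them by flypes), but it is only a conditional derivation: both ingredients you assume are open conjectures in the paper, so what you have is the same reduction the authors state, not a proof of the statement itself.
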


\section{Preliminaries}

Let $K$, $p$, $h$ and $\gamma$ be as above. Among the many equivalent definitions of bridge number of a knot, we favor the following one.

\begin{definition}
The \emph{bridge number of $K$}, $\beta(K)$, is the minimal number of maxima of $h|_{\gamma}$ over all $\gamma\in K$ such that $h|_{\gamma}$ is Morse.
\end{definition}


The first definition of diagrammatic bridge number we consider is the following. Let $K, \gamma, p, h$ be as above. When $p|_{\gamma}$ is regular and $|p^{-1}(x, y)|\leq 2, \forall(x, y)\in \mathbb{R}^2$, we say $p(\gamma)$ is a {\it knot projection} for $K$. Hence, every knot projection is a finite, $4$-valent graph in the plane. A {\it knot diagram} of $K$ is a knot projection $p(\gamma)$ together with labels that indicate which strand is the over-strand and which is the under-strand at each double point. Given a diagram $D$ and an embedding $\gamma$ of a knot type $K$, we say that $\gamma$ \emph{presents} $D$ if the following hold:

\begin{enumerate}
\item $h|_{\gamma}$ is Morse;

\item $p(\gamma)$ is a knot projection of $K$;

\item $p(\gamma)$ together with crossing labels is equal to $D$.
\end{enumerate}

\begin{definition}\label{perp}
Given a knot diagram $D$ of knot $K$, define the \emph{perpendicular bridge number of $D$}, $b_{\perp}(D)$, to be the minimal number of maxima of $h|_{\gamma}$ over all $\gamma$ presenting $D$.
\end{definition}

For any diagram $D$ of a knot $K$, if $\gamma$ presents $D$, by definition $\gamma\in K$, so $b_{\perp}(D)\geq\beta(K)$. Furthermore, since, after an arbitrarily small perturbation that preserves the number of maxima of $h|_{\gamma}$, any $\gamma\in K$ has a diagram, $\beta(K)$ is realized as $b_{\perp}(D)$ for some diagram $D$ of $K$. Therefore, $b_{\perp}(D)$ has the desired properties of a diagrammatic bridge number. 
Thoerem~\ref{omega=perp} proves that the perpendicular bridge number of a diagram $D$ equals its Wirtinger number, $\omega(D)$. 

The Wirtinger number of a diagram $D$ is calculated algorithmically and is closely related to the problem of finding the minimal number of Wirtinger generators in $D$ which suffice to generate the group of $K$. The Wirtinger number was introduced in \cite{BKVV}, and it follows from the main theorem therein that $\omega(D)$ constitutes a diagrammatic bridge number in the above sense.  We recall the definition here.

Let $D$ be a knot diagram with $n$ crossings and let $v(D)$ be the set of crossings $c_1$, $c_2$,..., $c_n$ in the plane. Since we think of deleting a neighborhood of each understand from a knot projection to form the diagram $D$, then $D$ consists of $n$ disjoint closed arcs in the plane called \emph{strands}. Denote by $s(D)$ the set of strands $s_1$, $s_2$,..., $s_n$ for the diagram $D$. Two strands $s_i$ and $s_j$ of $D$ are {\it adjacent} if $s_i$ and $s_j$ are the under-strands of some crossing in $D$. 
In what follows we assume that $n>1$ so, in particular, no strand is adjacent to itself.


We call $D$ \textit{k-partially colored} if we have specified a subset $A$ of the strands of $D$ and a function $f: A \to \{1, 2, \dots, k\}$.  We refer to this partial coloring by the tuple $(A, f)$. Next, we define an operation that allows us to pass from one partial coloring on a diagram to another.

\begin{definition}
\label{move}
Given $k$-partial colorings $(A_1, f_1)$ and $(A_2, f_2)$ of $D$, we say $(A_2, f_2)$ is the result of a \textit{coloring move } on $(A_1, f_1)$ if the following conditions hold:
\begin{enumerate}
    \item $A_1 \subset A_2$ and $A_2 \setminus A_1 = \{s_j\}$ for some strand $s_j$ in $D$;
    \item $f_2|_{A_1} = f_1$;
    \item $s_j$ is adjacent to $s_i$ at some crossing $c \in v(D)$, and $s_i\in A_1$;
    \item the over-strand $s_k$ at $c$ is an element of $A_1$;
    \item $f_1(s_i) = f_2(s_j)$.
\end{enumerate}
\end{definition}

We denote the above coloring move by $(A_1, f_1)\to(A_2, f_2)$. Two consecutive coloring moves are illustrated in Figure \ref{coloring.fig}, which we borrowed from~\cite{BKVV}. 
%
We say a knot diagram $D$ is \textit{k-colorable} if there exists a $k$-partial coloring $(A_0, f_0) = (\{s_{i_1}, s_{i_2}, \dots, s_{i_k}\}$, $f_0(s_{i_j}) = j)$ and a sequence of coloring moves which result in coloring the entire diagram. 

\begin{definition} \label{omega} Let $D$ be a knot diagram. The smallest integer $k$ such that $D$ 
is $k$-colorable is the {\it Wirtinger number} of $D$, denoted $\omega(D)$. 
\end{definition}
For a proof that the minimum value of $\omega(D)$ over all diagrams $D$ of a knot $K$ equals the bridge number of $K$ we again refer the reader to~\cite{BKVV}. The Wirtinger number is our main tool for computing bridge numbers of minimal diagrams and comparing these to the bridge numbers of the corresponding knots.


\begin{figure}
	\includegraphics[width=4in]{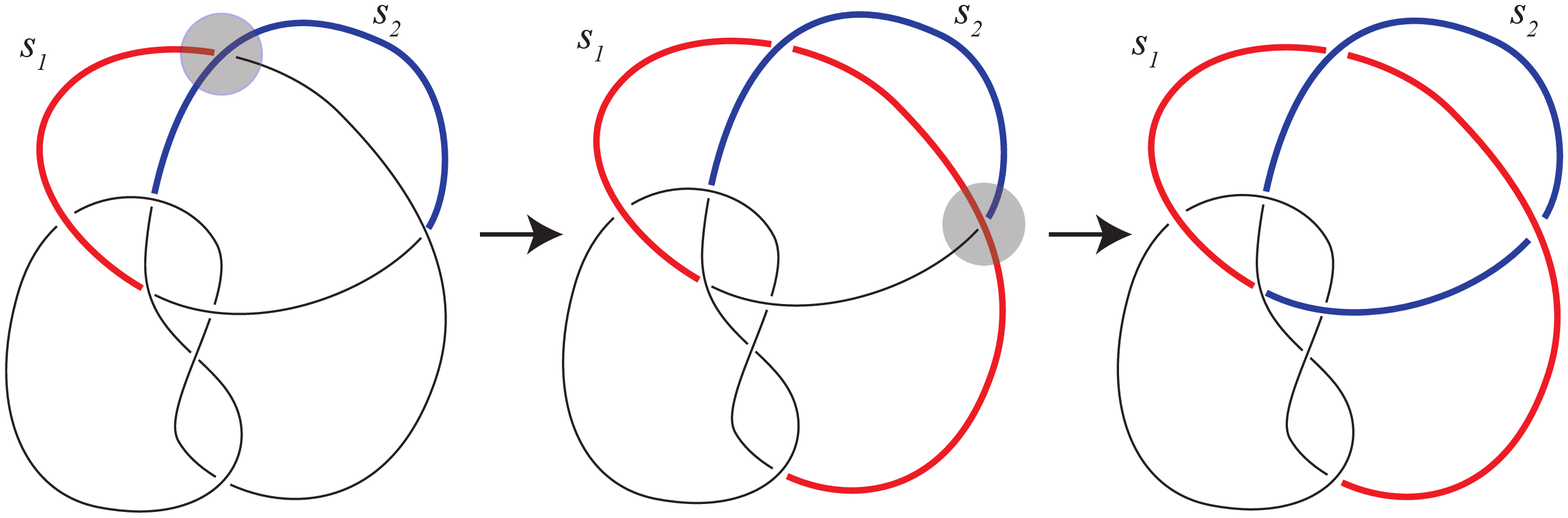}
	\caption{Two coloring moves on the knot $8_{17}$, corresponding to the shaded crossings. The coloring process terminates at this stage. More generally, this diagram is not 2-colorable. $8_{17}$ is a three-bridge knot.}
	\label{coloring.fig}
	\end{figure}
	



In Section \ref{main} we prove that $\omega(D)=b_{\perp}(D)$ for any knot diagram $D$. In Section~\ref{applications} we analyze how Wirtinger number behaves with respect to diagrammatic connected sum and we use these results to show that given any knot $K$ you can always find a diagram $D$ such that the difference between $\omega(D)$ and $\beta(K)$ is arbitrarily large. Finally, we extend these results to show that the difference between $\omega(D)$ and $\beta(K)$ can be arbitrarily large even among crossing number minimizing diagrams if composite knots are allowed. We conclude by exhibiting a crossing number minimizing diagram of a {\it prime} knot which also fails to realize the Wirtinger number.


\section{Main Theorem}\label{main}

Theorem \ref{thm:overpass} follows from the next two lemmas.
We recall from \cite{Schu54} the definition of an overpass (resp. underpass) in a knot diagram.
For a diagram $D=p(\gamma)$ of a knot type $K$, an {\em overpass} (resp. {\em underpass}) is a subarc $p(\alpha)$, where $\alpha$ is a subarc of $\gamma$ and $p|_\alpha$ is an injection, which contains only over-crossings (resp. under-crossings).
Any knot diagram $D$ can be decomposed into an alternating sequence of over- and under-passes $\alpha_1^+, \alpha_1^-,\ldots,\alpha_n^+,\alpha_n^-$. Let us rephrase the definition of overpass bridge number, which we recalled in the introduction, in this language.

\begin{definition} \label{overpass}
The {\em overpass bridge number} of a knot diagram $D$ as the minimal number of $n$ over all alternating sequences of over/underpasses $\alpha_1^+, \alpha_1^-,\ldots,\alpha_n^+,\alpha_n^-$. 
\end{definition}
 
It is well known that the {\em overpass bridge number} of a knot type $K$, namely, the minimal overpass bridge number over all diagrams of $K$, is simply the bridge number of $K$.

\begin{lemma}
If a knot diagram has the minimal overpass bridge number,
then any pair of consecutive over/underpasses intersect in
at least one crossing.
\end{lemma}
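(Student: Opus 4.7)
The plan is to proceed by contradiction. Suppose $D$ achieves the minimal overpass bridge number $n$, yet some consecutive pair of passes, say $\alpha_i^+$ and $\alpha_i^-$ (the case of $\alpha_i^-$ and $\alpha_{i+1}^+$ is symmetric), shares no crossing of $D$. The aim is to produce a diagram $D'$ of $K$ with overpass bridge number at most $n-1$, contradicting minimality.

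First I would localize the configuration. Let $c_1$ be the last overcrossing traversed along $\alpha_i^+$ and $c_2$ the first undercrossing traversed along $\alpha_i^-$; these are distinct points of the plane by hypothesis, and in fact the sets of crossings on $\alpha_i^+$ and on $\alpha_i^-$ are disjoint. Let $\tau\subset\gamma$ denote the sub-arc joining $c_1$ to $c_2$. By the definition of the pass decomposition, $\tau$ contains no crossings at all, and the overstrand at $c_2$ must belong to some other overpass $\alpha_j^+$ with $j\neq i$.

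Next I would place $\gamma$ in $\mathbb{R}^3$ so that $\alpha_i^+$ sits at constant height $z=1$, $\alpha_i^-$ at $z=-1$, and the remaining strands lie in a small neighborhood of $z=0$; this embedding is compatible with $D$ because each pass involves only one kind of crossing. Under this embedding, $h|_\gamma$ has exactly $n$ local maxima, one corresponding to each overpass. The crux is then to perform an ambient isotopy of $\gamma$ that cancels the adjacent max-min pair corresponding to $\alpha_i^+$ and $\alpha_i^-$, yielding an embedding of $K$ with $n-1$ maxima. A generic perturbation of the projection then gives a diagram $D'$ of $K$ with overpass bridge number at most $n-1$, which is the desired contradiction.

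The hard part is executing the max-min cancellation in a diagrammatically tractable way. The natural strategy is to lift $\alpha_i^-$ upward toward the plane $z=1$ so that it fuses with $\alpha_i^+$. The local obstructions to such a lift are precisely the overstrands at the undercrossings of $\alpha_i^-$; by the no-shared-crossing hypothesis none of these belong to $\alpha_i^+$, so each can be negotiated by a local sequence of Reidemeister II and III moves that ferries the arc past the obstructing overstrand without touching $\alpha_i^+$. The technical content is to verify that these local moves (i) assemble into an ambient isotopy of $\gamma$ preserving the knot type, and (ii) have a cumulative effect that merely merges $\alpha_i^+$ with $\alpha_{i+1}^+$ and deletes $\alpha_i^-$, rather than introducing fresh pass switches elsewhere. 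Here the hypothesis enters critically in (ii): because $\alpha_i^+$ is uninvolved at each undercrossing of $\alpha_i^-$, the local modification at each such crossing can be arranged to leave the rest of the pass decomposition untouched, so a case-by-case analysis at the crossings of $\alpha_i^-$ closes the argument.
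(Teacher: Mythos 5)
Your high-level architecture --- realize $D$ by an embedding with $n$ maxima, cancel the max--min pair contributed by $\alpha_i^+$ and $\alpha_i^-$, then read off a diagram with fewer overpasses --- is reasonable and close in spirit to the intended argument, but the step that carries all of the content, the cancellation itself, does not work as you have set it up. You propose to lift $\alpha_i^-$ all the way up to the level $z=1$ of $\alpha_i^+$, and you dismiss the obstructing overstrands at the undercrossings of $\alpha_i^-$ on the grounds that, by hypothesis, none of them belongs to $\alpha_i^+$. That is a non sequitur: the hypothesis only tells you the obstructing overstrand is some \emph{other} overpass $\alpha_j^+$, and an obstruction coming from $\alpha_j^+$ is exactly as real as one coming from $\alpha_i^+$ would be. Raising $\alpha_i^-$ above $\alpha_j^+$ requires either switching that crossing (which changes the knot type) or first eliminating it, and Reidemeister II and III moves never convert an undercrossing into an overcrossing nor remove an essential crossing. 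In the reduced alternating diagram of the trefoil every consecutive pair of passes satisfies your hypothesis, yet every undercrossing of every underpass is essential and cannot be ``ferried past'' its overstrand. You have conflated ``the obstruction does not involve $\alpha_i^+$'' with ``the obstruction can be removed,'' and the part you flag as ``the technical content to verify'' is precisely the part that is false.

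The hypothesis enters the correct move quite differently. Place every overpass at height $+1$ and every underpass at height $-1$, with the junctions on $\{z=0\}$. Since an overpass contains only overcrossings, the projections of the overpasses are pairwise disjoint embedded arcs, so the vertical disk $\Delta^+$ between $\alpha_i^+$ and its projection into $\{z=0\}$ has interior disjoint from $\gamma$; likewise for the vertical disk $\Delta^-$ below $\alpha_i^-$. The no-common-crossing hypothesis says exactly that $p(\alpha_i^+)\cap p(\alpha_i^-)$ is the single junction point, hence $\Delta^+\cap\Delta^-$ is one point of $\gamma$. One may therefore simultaneously push $\alpha_i^+$ \emph{down} and $\alpha_i^-$ \emph{up} to the level $z=0$: this keeps $\alpha_i^+$ above every underpass and $\alpha_i^-$ below every overpass, so no crossing is disturbed, and $\alpha_i^+\cup\alpha_i^-$ becomes a monotone arc, killing one maximum --- no crossing of $\alpha_i^-$ ever needs to be removed. (Your final step also needs care: an embedding with $n-1$ maxima yields a diagram with at most $n-1$ overpasses only after normalizing the embedding into bridge position, not by a generic perturbation of the projection.)
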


\begin{proof}
Suppose that there exists a pair of consecutive
over/underpasses which has no crossing. 
Then we can obtain a knot diagram with a smaller overpass bridge number, by applying a move as shown in Figure~\ref{move1}.
\begin{figure}[htbp]
	\begin{center}
	\includegraphics[trim=0mm 0mm 0mm 0mm, width=.8\linewidth]{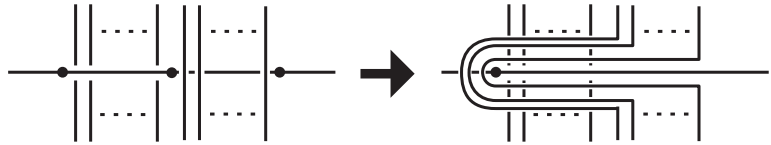}
	\end{center}
	\caption{Reducing the overpass bridge number.}
	\label{move1}
\end{figure}
\end{proof}

\begin{lemma}
Let $D$ be a crossing number minimizing knot diagram. Any pair of consecutive
over/underpasses in $D$ do not intersect at a crossing.
\end{lemma}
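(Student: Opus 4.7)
The plan is to derive a contradiction by extracting, from any pair of consecutive over/underpasses sharing a crossing in $D$, a diagram of the same knot with strictly fewer crossings. By symmetry it suffices to treat the case in which $\alpha_i^+$ and $\alpha_i^-$ meet at a point $p$ and share some crossing; the case of $\alpha_i^-$ and $\alpha_{i+1}^+$ is identical with over- and under-pass roles swapped. Among all such shared crossings, I would choose one, $c$, that is innermost in the following sense: writing $\beta\subset\alpha_i^+$ for the subarc from $c$ to $p$ and $\eta\subset\alpha_i^-$ for the subarc from $p$ to $c$, I require $\beta\cap\eta=\{p,c\}$, so that $\ell:=\beta\cup\eta$ is a simple closed curve in the plane. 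Such a $c$ exists, since any additional intersection of $\beta$ and $\eta$ is itself a shared crossing of $\alpha_i^+$ and $\alpha_i^-$ producing a strictly smaller sub-loop, and this nesting must terminate.

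Let $\Delta$ denote the disk in the plane bounded by $\ell$, and let $\sigma\subset\gamma$ be the lift of $\ell$, a subarc of $\gamma$ whose two endpoints are the top and bottom sheets of the crossing at $c$. The key geometric feature is a vertical sandwich: because $\beta$ lies in the overpass $\alpha_i^+$, its lift is above every strand of $\gamma$ that its projection meets, and dually the lift of $\eta\subset\alpha_i^-$ is below every such strand. Consequently any strand of $\gamma\setminus\sigma$ whose projection enters $\Delta$ has $z$-coordinate strictly between the heights of the lifts of $\beta$ and $\eta$.

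Using this, I plan to construct a disk $E\subset\mathbb{R}^3$ with $\partial E=\sigma\cup\sigma'$ and interior disjoint from $\gamma\setminus\sigma$, where $\sigma'$ is a short vertical segment joining the top and bottom sheets of $c$. Concretely, $E$ is obtained by lifting the portion adjacent to $\beta$ well above the plane, pushing the portion adjacent to $\eta$ well below, and arranging the central region of $E$ so that it meets the plane only along $\sigma'$; the vertical sandwich ensures that no strand of $\gamma\setminus\sigma$ can obstruct this. The ambient isotopy across $E$ then replaces $\sigma$ by $\sigma'$, producing a diagram of the same knot in which the crossing $c$ has disappeared (since $\sigma'$ projects to the single point $c$), along with every crossing $\beta$ or $\eta$ previously had with other strands. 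This contradicts the minimality of $D$. The main obstacle is verifying that $E$ can be placed disjoint from $\gamma\setminus\sigma$ in the presence of other strands passing through $\Delta$, and this is exactly what the vertical sandwich from the overpass/underpass structure buys us; it is the geometric content of the lemma.
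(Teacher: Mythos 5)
Your reduction hinges on the existence of a disk $E$ with $\partial E=\sigma\cup\sigma'$ whose interior misses $\gamma\setminus\sigma$, and this is exactly the step that fails. The ``vertical sandwich'' is not what saves you -- it is the obstruction. Suppose some strand $t$ of $\gamma\setminus\sigma$ enters $\Delta$ by passing \emph{under} $\beta$ and leaves it by passing \emph{over} $\eta$. Then $t$ lies below one arc of $\partial E$ and above another, i.e.\ it is hooked through the loop $\sigma\cup\sigma'$ (closing $t$ up by an arc far from $\Delta$ gives a curve with linking number $\pm 1$ with $\sigma\cup\sigma'$), so no disk bounded by $\sigma\cup\sigma'$ can avoid it, and the isotopy you describe does not exist. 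Such strands do occur, and in those cases the move genuinely changes the knot: take a reduced $4$-crossing diagram of the trefoil with overpass bridge number $2$, with overpasses $A,B$ and underpasses $a,b$. Each of the four consecutive pairs shares exactly one crossing, and a parity count across the loop $\ell$ for the pair $(A,a)$ (using that the shared crossing cannot be nugatory, since otherwise the trefoil would have a $3$-crossing diagram with two overpasses) forces $\beta$ to carry the crossing with $b$ and $\eta$ the crossing with $B$. Your move deletes every crossing on $\beta\cup\eta$, leaving a one-crossing diagram -- the unknot. So the innermost choice of $c$ does not help: it makes $\ell$ embedded but does nothing about strands piercing $\Delta$.

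A move that does work, and which is the content of the paper's figure, modifies only \emph{one} of the two arcs rather than collapsing both. Push the lift of $\eta$ to lie entirely below the rest of $\gamma$; since an arc in an open half-space below everything can be re-routed freely rel endpoints, slide it so that its projection runs parallel to $\beta$, just outside $\Delta$. This is an honest ambient isotopy. It destroys every crossing of $\eta$ (at least $k+1$ of them, counting $c$, where $k$ is the number of crossings of $\eta$ with strands other than $\beta$) and creates one new crossing for each of the $m$ crossings of $\beta$ with other strands; a short check of the local picture at $c$ shows the new arc can be routed so that no crossing survives at $c$. The symmetric move slides $\beta$ above everything along $\eta$, trading $m+1$ crossings for $k$. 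Since $\min(m-k,\,k-m)\le 0$, at least one of the two moves strictly decreases the crossing number, which is the desired contradiction. Your overall strategy (innermost shared crossing, use of the over/under structure to gain freedom of movement) is in the right spirit, but the specific isotopy must be one-sided.
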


\begin{proof}
Consider a knot diagram which contains a pair of consecutive over/underpasses intersecting in
at least one crossings.
Apply a move as shown in Figure \ref{move2}. This results in a knot diagram for the same knot whose crossing number is smaller.
\begin{figure}[htbp]
	\begin{center}
	\includegraphics[trim=0mm 0mm 0mm 0mm, width=.8\linewidth]{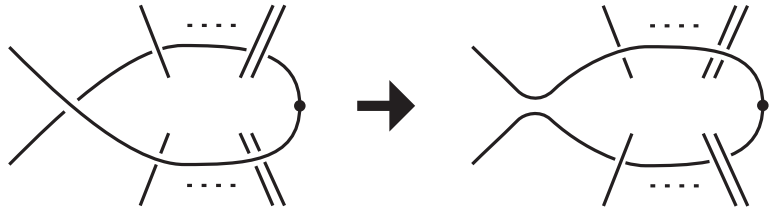}
	\end{center}
	\caption{Reducing the crossing number.}
	\label{move2}
\end{figure}
\end{proof}

Theorem \ref{omega=perp} relates the Wirtinger number of a diagram to its perpendicular bridge number. It is proved in~\cite{BKVV} that $\omega(D)$ can be calculated by computer from Gauss code for $D$, while the more geometric $b_{\perp}(D)$ tends to be rather elusive.


To prove Theorem \ref{omega=perp}, some additional notation must be established. As above, given a diagram $D$, $s(D)$ denotes the set of strands $s_1$, $s_2$,..., $s_n$ and $v(D)$  the set of crossings $c_1$, $c_2$,..., $c_n$. 
Furthermore, if $\gamma$ presents $D$, we denote by $\overline{s}_1$, $\overline{s}_2$,..., $\overline{s}_n$  the subarcs of $p(\gamma)$ obtained by extending each of $s_1$, $s_2$,..., $s_n$ slightly so that the endpoints of the arcs $\overline{s}_1$, $\overline{s}_2$,..., $\overline{s}_n$ are contained in $v(D)$. In particular, if we consider $p(\gamma)$ as a finite, 4-valent graph, then $\overline{s}_i$ is the union of all edges in $p(\gamma)$ whose interiors intersect $s_i$. Additionally, let $\{a_i^+,a_i^-\}:=(p|_{\gamma})^{-1}(v_i)$, where $h(a_i^+)>h(a_i^-)$ for all $v_i\in v(D)$. Finally, observe that $(p|_{\gamma})^{-1}(\overline{s}_i)$ consists of a closed arc of positive length, possibly together with a collection of isolated points in $\gamma$, corresponding to crossings of $D$ where $s_i$ is the over-strand. We denote the closed arc component of $(p|_{\gamma})^{-1}(\overline{s}_i)$ by $\hat{s}_i$.

\begin{lemma}\label{critical}
Let $D$ be a knot diagram and $\gamma$ an embedding that presents $D$ and minimizes $b_{\perp}(D)$. Then, after an isotopy of $\gamma$ which fixes $p(\gamma)$ pointwise, we can assume the following:
for every $s_i\in s(D)$, $\hat{s}_i$ contains at most three critical points of $h|_{\gamma}$; if $\hat{s}_i$ contains exactly three critical points of $h|_{\gamma}$, then two of these critical points are minima and one is a maximum; 
all critical points of $h|_{\gamma}$ corresponding to minima are contained in the set $\{a_1^-, a_2^-,...,a_n^-\}$.
\end{lemma}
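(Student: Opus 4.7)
The plan is to modify $\gamma$ through a sequence of height-only isotopies—each fixing $p(\gamma)$ pointwise, so that only heights change, subject to $h(a_l^+)>h(a_l^-)$ at each crossing $v_l$—and thereby establish Conditions (1)--(3) in the statement. Since $\gamma$ realizes $b_{\perp}(D)$, no such isotopy can reduce the number of maxima of $h|_\gamma$; I will exploit this principle by invoking (the impossibility of) Morse cancellations to rule out undesirable configurations. First, I show that each $\hat{s}_i$ has at most one critical point in its interior: otherwise, by Morse alternation, the interior would contain two adjacent critical points along $\gamma$ forming a maximum--minimum pair, and a standard cancellation supported in a neighborhood contained in the interior of $\hat{s}_i$ would eliminate both—any $a_l^+$ heights in the neighborhood can be raised to preserve the embedding constraints, while no $a_l^-$ points are affected—reducing the number of maxima, in contradiction to minimality.

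Next, I show that no minimum of $h|_\gamma$ lies in the interior of any $\hat{s}_i$. Suppose $\hat{s}_i$ has such a minimum $m$; the adjacent critical points of $h|_\gamma$ are then maxima $M_\pm$, and by the previous step neither lies in the interior of $\hat{s}_i$. If $M_+$ (say) lies at an endpoint $e$ of $\hat{s}_i$, cancellation of the pair $(m, M_+)$ requires only lowering $h(e)=h(a_k^-)$, which preserves all constraints and again reduces $b_{\perp}(D)$—contradiction. Otherwise $M_+$ lies in some other $\hat{s}_j$, and I perform a height isotopy that shifts $m$ to the endpoint $e$ of $\hat{s}_i$ on the side of $M_+$: lower $h(e)$ below $h(m)$, and adjust the profile on the arc from $m$ to $e$ to be monotonically decreasing, modifying intermediate $a_l^+$ heights (and if necessary the heights of their partner $a_l^-$'s elsewhere) to maintain all constraints without introducing new critical points. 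After the isotopy, $m$ becomes a regular point and $e$ becomes a local minimum of $h|_\gamma$ (both $\hat{s}_i$ and the adjacent $\hat{s}_j$ now ascend from $e$), with the total numbers of maxima and minima unchanged; iterating yields Condition~(3).

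With interior minima eliminated, each $\hat{s}_i$ has at most one interior critical point (necessarily a maximum) together with at most two critical points at its endpoints, for a total of at most three—giving Condition~(1). For Condition~(2), observe that if $\hat{s}_i$ has exactly three critical points, one is an interior maximum and two are at the endpoints; the interior maximum forces $h|_{\hat{s}_i}$ to ascend from each endpoint into the interior, so an endpoint $e$ is critical of $h|_\gamma$ only if the adjacent $\hat{s}_j$ sharing $e$ also ascends from $e$, in which case $e$ is a local minimum. Hence both endpoint critical points are minima. The main technical obstacle lies in justifying the minimum-shifting isotopy in the second step: the consistent adjustment of intermediate $a_l^+$ (and possibly $a_l^-$) heights so as to preserve all embedding constraints throughout $\gamma$, while keeping the profile Morse and not inadvertently creating any new maxima, is the delicate part of the argument.
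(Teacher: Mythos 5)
Your overall strategy---normalize the critical points of each $\hat{s}_i$ by vertical isotopies fixing $p(\gamma)$, and use minimality of $\gamma$ to forbid cancellations---is the right one, but the step you yourself flag as ``the delicate part'' is a genuine gap, and it is precisely the content of the lemma. Both of your key moves, the cancellation of an interior max--min pair in Step 1 and the shifting of an interior minimum to an endpoint in Step 2, require \emph{lowering} heights: points near the cancelled maximum must descend, and in Step 2 you explicitly lower the endpoint height $h(a_k^-)$ and the intermediate over-crossing points $a_l^+$ lying on $\hat{s}_i$. Each such $a_l^+$ is constrained to stay above its partner $a_l^-$, which lies on a \emph{different} strand; lowering that partner in turn perturbs the critical-point structure of that strand (and of the second strand sharing the endpoint $a_l^-$), potentially creating new maxima or forcing further lowerings. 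You give no argument that this cascade terminates, or that it can be carried out without changing the number of maxima elsewhere; and without such an argument the contradiction in Step 1 does not go through either, since if the cancelling isotopy cannot actually be realized while fixing $p(\gamma)$ and preserving the crossing data, the minimality of $\gamma$ is not contradicted.

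The paper's proof sidesteps this entirely by only ever pushing $\hat{s}_i$ \emph{upward}, into the region $U$ consisting of the part of the interior of $p^{-1}(\overline{s}_i)$ lying above $\hat{s}_i$. That region is disjoint from $\gamma$ because $s_i$ is the over-strand at every crossing in the interior of $\overline{s}_i$, so an upward isotopy supported near $U$ can never violate a crossing constraint and never touches any other strand; moreover the separate isotopies (one per strand) have pairwise disjoint supports and can be performed simultaneously. The price is that one cannot demand at most one interior critical point in the first pass: since the endpoints $a_j^-,a_k^-$ are fixed, raising the middle of the arc leaves one maximum and up to two minima, and a second round of upward isotopies (through the portions of $U$ below the planes $z=h(a_j^-)$ and $z=h(a_k^-)$, together with a minimality argument showing the maximum is not trapped below those planes) is needed to absorb the surviving minima into the endpoints. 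If you want to repair your argument, replace your downward moves with these upward ones---that is, replace a wiggly height profile by a monotone or unimodal majorant that is everywhere greater than or equal to the old profile and agrees with it at the fixed endpoints $a_j^-$, $a_k^-$.
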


\begin{proof}

Assume $\gamma$ is an embedding that presents $D$ and minimizes $b_{\perp}(D)$. Consider a strand $s_i\in s(D)$. Let $U$ denote the portion of the interior of $p^{-1}(\overline{s}_i)$ that lies above $\hat{s}_i$. By definition of strand, $U$ is disjoint from $\gamma$. Hence, there is an ambient isotopy of $\gamma$ supported in an arbitrarily small open neighborhood of $U$ in $\mathbb{R}^3$, after which $\hat{s}_i$ is replaced by an arc with one maximum and at most two minima and $p(\gamma)$ is preserved pointwise.
See Figure~\ref{3crit.fig}.  This is a contradiction to the minimality of $h|_{\gamma}$, unless $\hat{s}_i$ contains at most three critical points of $h|_{\gamma}$. Moroever, this isotopy shows that we can assume that if $\hat{s}_i$ contains exactly three critical points of $h|_{\gamma}$, then two of these critical points are minima and one is a maximum.

As an intermediate next step, we arrange that the critical points of $h|_{\gamma}$ are contained in the interiors of the $\hat{s}_i$. Indeed, since $(p|_{\gamma})^{-1}(v(D))$ is a discrete set in $\gamma$, after an arbitrarily small ambient isotopy of $\gamma$ that fixes $p(\gamma)$ pointwise and preserves that number of critical points, we can assume that no critical point of $h|_{\gamma}$ is contained in $p^{-1}(v(D))$.

\begin{figure}[h]
	\includegraphics[width=1.2in]{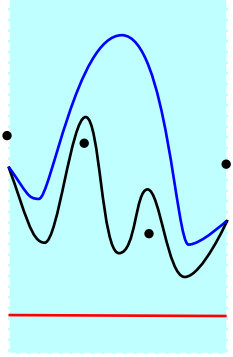}
	\caption{Isotopy decreasing the number of critical points for $\hat{s}_i$, if it contains more than three critical points of $h|_{\gamma}$. The red line is the projection, $s_i$, and the black dots represent the lifts of strands over which $s_i$ passes. }
	\label{3crit.fig}
	\end{figure}

We have already shown that, if $\hat{s}_i$ contains exactly three critical points of $h|_{\gamma}$, then two of these critical points are minima and one is a maximum. To conclude the proof, we need to isotope $\gamma$, preserving $p(\gamma)$ pointwise, so that all critical points of $h|_{\gamma}$ corresponding to minima are contained in the set $\{a_1^-, a_2^-,...,a_n^-\}$. By the above, each arc $\overline{s}_i$ fits into one of three mutually disjoint categories:

\begin{enumerate}
\item Type I: $\hat{s}_i$ contains no minima of $h|_{\gamma}$.

\item Type II: $\hat{s}_i$ contains exactly one minimum and no maximum of $h|_{\gamma}$.

\item Type III: $\hat{s}_i$ contains exactly one minimum and exactly one maximum of $h|_{\gamma}$.

\item Type IV: $\hat{s}_i$ contains exactly two minima and exactly one maximum of $h|_{\gamma}$.

\end{enumerate}

In each of these cases, we let $U$ denote the portion of the interior of $p^{-1}(\overline{s}_i)$ that lies above $\hat{s}_i$. Recall that, by the definition of strand, $U$ is disjoint from $\gamma$.

If $\overline{s}_i$ is an edge of Type II, let $v_j$ and $v_k$ be the endpoints of $\overline{s}_i$ and suppose $h(a_j^-)\leq h(a_k^-)$. Let $U'$ be the portion of $U$ below the plane $\{z=h^{-1}(h(a_j^-))\}$. Then there is an ambient isotopy of $\gamma$ supported in an arbitrarily small open neighborhood of $U'$ in $\mathbb{R}^3$ after which $\hat{s}_i$ is replaced by an arc with no critical points in its interior, $h|_{\gamma}$ has a minimum at $a_j^-$, and $p(\gamma)$ is preserved pointwise. See Figure~\ref{type2.fig}.

\begin{figure}[h!]
\begin{picture}(225,150)
	\put(0,0){\includegraphics[scale=.29]{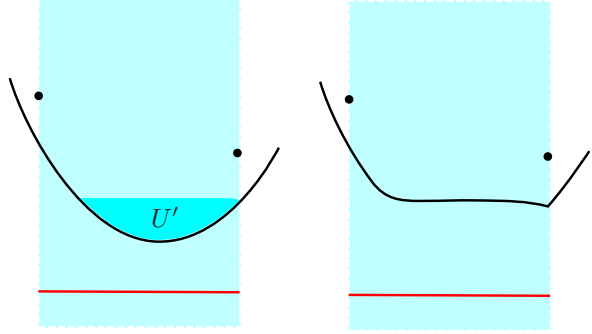}}
	\put(54,40){$U'$}
	\end{picture}
	\caption{Isotopy for a strand of Type II.}
	\label{type2.fig}
	\end{figure}


If $\overline{s}_i$ is an edge of Type III, let $M_i$ be the maximum of $h|_{\gamma}$ in $\hat{s}_i$ and let $m_i$ be the minimum of $h|_{\gamma}$ in $\hat{s}_i$. Let $v_j$ and $v_k$ be the endpoints of $\overline{s}_i$ and suppose $a_j^-$ is closer to $m_i$ than it is to $M_i$, along $\hat{s}_i$. Let $U^*$ be the portion of $U$ below the plane $\{z=h^{-1}(h(a_j^-))\}$ and let $U'$ be the component of $U^*$ whose closure in $U$ contains $m_i$ and $a_j^-$. Note that $M_i$ is not contained in the closure of $U'$ as this would imply that there is an ambient isotopy of $\gamma$ that eliminates a minimum and a maximum while preserving $p(\gamma)$, a contradiction to the minimality assumption for $\gamma$. Subsequently, there is an ambient isotopy of $\gamma$ supported in an arbitrarily small open neighborhood of $U'$ in $\mathbb{R}^3$ after which $\hat{s}_i$ is replaced by an arc with a single maximum critical point in its interior, $h|_{\gamma}$ has a minimum at $a_j^-$, and $p(\gamma)$ is preserved pointwise. See Figure~\ref{type3.fig}.

\begin{figure}[h!]
\begin{picture}(225,150)
	\put(0,0){\includegraphics[scale=.3]{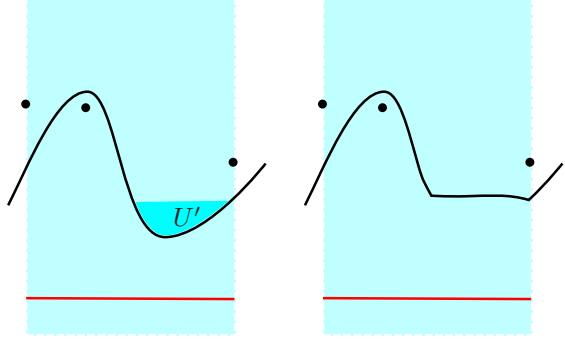}}
	\put(63,42){$U'$}
	\end{picture}
	\caption{Isotopy for a strand of Type III.}
	\label{type3.fig}
	\end{figure}

If $\overline{s}_i$ is an edge of Type IV, let $M_i$ be the maximum of $h|_{\gamma}$ in $\hat{s}_i$ and let $m^1_i$ and $m^2_i$ be the two minima of $h|_{\gamma}$ in $\hat{s}_i$. Let $v_j$ and $v_k$ be the endpoints of $\overline{s}_i$ and suppose $a_j^-$ is closer to $m^1_i$ than it is to $m^2_i$, along $\hat{s}_i$. Let $U^*$ be the portion of $U$ below the plane $\{z=h^{-1}(h(a_j^-))\}$. Note that $h(M_i)>max\{h(a_j^-),h(a_k^-)\}$, since otherwise there would exists an ambient isotopy of $\gamma$ which eliminates a minimum and a maximum while preserving $p(\gamma)$, a contradiction to the minimality assumption for $\gamma$. Hence, the portion of $U$ below the plane $\{z=h^{-1}(h(a_j^-))\}$ contains a disk component $U'$ that is incident to $a_j^-$. Similarly, the portion of $U$ below the plane $\{z=h^{-1}(h(a_k^-))\}$  contains a disk component $U''$ that is incident to $a_k^-$. There is an ambient isotopy of $\gamma$ supported in an arbitrarily small open neighborhood of $U'\cup U''$ in $\mathbb{R}^3$ after which $\hat{s}_i$ is replaced by an arc with a single maximum critical point in its interior, $h|_{\gamma}$ has a minimum at $a_j^-$ and at $a_k^-$, and $p(\gamma)$ is preserved pointwise. See Figure~\ref{type4.fig}.

\begin{figure}[h!]
\begin{picture}(235,150)
	\put(0,0){\includegraphics[scale=.3]{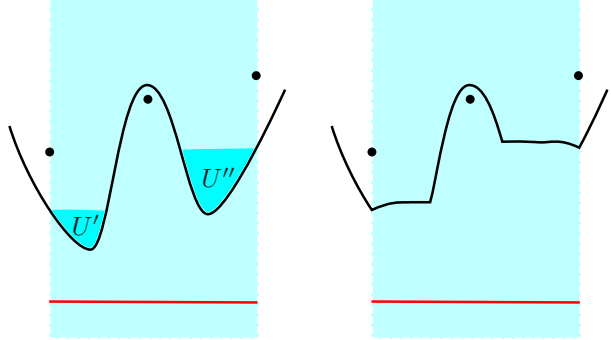}}
	\put(24,40){$U'$}
	\put(73,58){$U''$}
	\end{picture}
	\caption{Isotopy for a strand of Type IV.}
	\label{type4.fig}
	\end{figure}


Since each of the ambient isotopies described above are supported in pairwise disjoint regions in $\mathbb{R}^3$, we can simultaneously apply the above isotopies to produce an ambient isotopy of $\gamma$ that fixes $p(\gamma)$ pointwise, results in an embedding of $\gamma$ that minimizes $b_{\perp}(D)$, and results in all critical points of $h|_{\gamma}$ corresponding to minima being contained in $\{a_1^-, a_2^-,...,a_n^-\}$.
\end{proof}

\begin{proof}  [Proof of Theorem~\ref{omega=perp}]
Given a diagram $D$ of a knot $K$, let $\mu:=\omega(D)$. As in the proof of main result in \cite{BKVV}, we can construct an embedding $\gamma$ that presents $D$ such that $h|_{\gamma}$ has $\mu$ maxima. Hence, $b_{\perp}(D)\leq w(D)$. It remains to show that $b_{\perp}(D)\geq w(D)$.

Let $D$ be a diagram of $K$ and let $\gamma$ be an embedding of $K$ that presents $D$ and realizes $b_{\perp}(D)$. Assume that we have isotoped $\gamma$ so that the conclusions of Lemma~\ref{critical} hold. That is, all critical points corresponding to minima of $h|_{\gamma}$ are contained in $\{a_1^-, a_2^-,...,a_n^-\}$, all critical points corresponding to maxima of  $h|_{\gamma}$ are contained in the interior of the arcs $\hat{s}_i$ for $i\in\{1,...,n\}$, and each arc $\hat{s}_i$ contains at most one critical point corresponding to a maximum of $h|_{\gamma}$. Moreover, after a further isotopy of $\gamma$, fixing $p(\gamma)$ pointwise as always, we can additionally assume that all critical points of $h|_{\gamma}$ and all points in $(p|{\gamma})^{-1}(v(D))$ take distinct values under $h$ and that all critical points corresponding to maxima of $h|_{\gamma}$ lie above all of the points $(p|{\gamma})^{-1}(v(D))$.

Define $g:s(D)\rightarrow \mathbb{R}$ by $g(s_i)=max(\{h(t)|t\in \hat{s}_i\})$. Note that this value is well-defined since $\hat{s}_i$ is compact and that $g$ is one-to-one since all critical points of $h|_{\gamma}$ and all points in $(p|{\gamma})^{-1}(v(D))$ take distinct values under $h$. Order the elements of $s(D)$ in order of decreasing value under the map $g$, and denote the resulting sequence by $s_{i_1},...,s_{i_n}$. Let $b_{\perp}(D)=m$ and observe that the above assumptions guarantee that $s_{i_1},...,s_{i_m}$ are exactly the strands which contain maxima of $h|_{\gamma}$. Set $A_k:=\{s_{i_1},...,s_{i_{m+k}}\}$ for $0\leq k\leq n-m$ and define $f_0: A_0 \rightarrow \{1,...,m\}$ by $f_0(s_{i_j}):=j$ for $1\leq j\leq m$.

Let $M=\{d_1,...,d_m\}\subset \{a_1^-, a_2^-,...,a_n^-\}$ be the collection of critical points corresponding to minima for $h|_{\gamma}$. The closure of each component of $\gamma\setminus M$ is then a union of (consecutive) arcs $\hat{s}_{j}$ and contains a unique critical point corresponding to a maximum of $h|_{\gamma}$. Therefore, each component of $\gamma\setminus M$  contains the interior of a unique arc in the set $\{\hat{s}_{i_1},...,\hat{s}_{i_m}\}$. Extend $f_0$ to a function $f:s(D)\rightarrow \{1,...,m\}$ by assigning $f(s_j):=f(s_{i_k})=k$ where $\hat{s}_{i_k}\in \{\hat{s}_{i_1},...,\hat{s}_{i_m}\}$ is the unique element of this set with the property that  $int(\hat{s}_j)$ and $int(\hat{s}_{i_k})$ are contained in the same component of $\gamma\setminus M$. Define $f_k=f|_{A_k}$ for all $1\leq k\leq n-m$.

To show that $D$ is $m$-meridionally colorable, it remains to be shown that $(A_k, f_k)\to(A_{k+1}, f_{k+1})$ is a valid coloring move for all $0\leq k\leq n-m-1$. Note that $A_{k+1}\setminus A_k=\{s_{i_{m+k+1}}\}$ and let $p:=f(s_{i_{m+k+1}}), p\in \{1,...,m\}$. By the definition of $f$, we have that $\hat{s}_{i_{m+k+1}}$ is in the same component of $\gamma\setminus M$ as $\hat{s}_{i_{p}}$. But $\hat{s}_{i_{m+k+1}}$ does not contain a maximum of $h|_{\gamma}$, since $m+k+1>m$. Therefore, $s_{i_{m+k+1}}$ is adjacent to a strand $s_{i_r}$ such that $f(s_{i_r})=p$ and $g(s_{i_r})>g(s_{i_{m+k+1}})$. Since $g(s_{i_r})>g(s_{i_{m+k+1}})$, we have that $s_{i_r}\in A_k$. Let $s_{i_l}$ be the strand which passes over the crossing $v_r$ at which $s_{i_{m+k+1}}$ and $s_{i_r}$ are adjacent. Since $\hat{s}_{i_{m+k+1}}$ does not contain a maximum of $h|_{\gamma}$ and $g(s_{i_r})>g(s_{i_{m+k+1}})$, then $g(s_{i_{m+k+1}})=h(a_r^-)$. Since $h(a_r^+)>h(a_r^-)$ and $a_r^+\in \hat{s}_{i_l}$, then $g(s_{i_l})>g(s_{i_{m+k+1}})$ and, thus, $s_{i_l}\in A_k$. Hence, conditions (1)-(5) of Definition~\ref{move} are satisfied and $(A_k, f_k)\to(A_{k+1}, f_{k+1})$ is a valid coloring move for all $0\leq k\leq n-m-1$. Therefore, $D$ is $m$-meridionally colorable. This implies that $w(D)\leq b_{\perp}(D)$ and the theorem follows.
\end{proof}

In \cite{BKVV} the authors computed the Wirtinger number for crossing number minimizing diagrams of all knots of eleven or fewer crossings and verified that the Wirtinger number for these diagrams realized bridge number. By the same method, the authors calculated the bridge number of all knots with crossing number 12. Since then, the authors have also calculated the bridge number of more that 450,000 knots with 16 or fewer crossings. Moreover, all of these knots where found to have a minimal crossing diagram $D$ such that $\omega(D)=\beta(K)$. These results motivate the search for the class of knots such that Wirtinger number is realized in a minimal crossing number diagram. We give some negative results in the next section as well as some ensuing conjectures regarding the compatibility of crossing number and Wirtinger number.


\section{Diagrammatic Connected Sum and Examples}\label{applications}

We prove that the Wirtinger number of a knot diagram is super-additive with respect to the connected sum operation on knot diagrams, and we use this fact to construct minimal diagrams whose diagrammatic bridge number is arbitrarily large compared to the bridge number of the knot. 

A diagram $D$ of a knot $K$ is \emph{prime} if every simple closed curve in the plane of projection that is disjoint from the crossings of $D$ and meets the edges of $p(K)$ transversally in two points
 bounds a disk in the plane of projection that is disjoint from the crossings of $D$. Otherwise, we say $D$ is composite.

If $D$ is composite, $\alpha$ is the simple closed curve in the plane of projection that illustrates that $D$ is composite and $s\in \alpha\setminus D$, then the triple $(D,\alpha,s)$ induces a pair of diagrams $D_1$  and $D_2$ by surgering $D$ along the arc of $\alpha\setminus D$ that contains $s$. See Figure~\ref{diagramconnectedsum}. In this case, we write $D=D_1\# D_2$ and say $D$ is the \emph{connected sum} of $D_1$ and $D_2$.  

\begin{figure}[htbp]
	\begin{center}
	\begin{tabular}{cc}
	\includegraphics[trim=0mm 0mm 0mm 0mm, width=.3\linewidth]{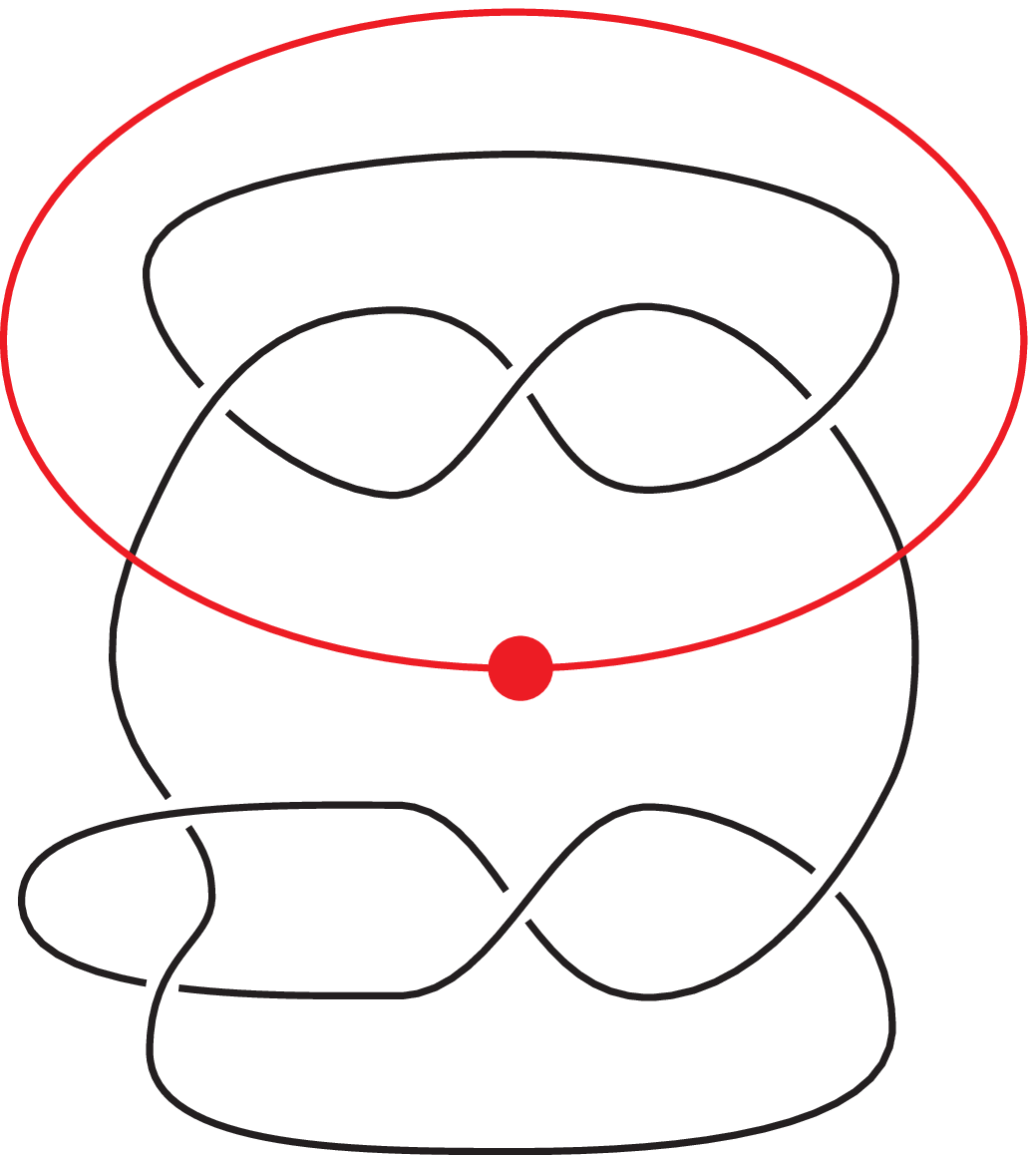}&
	\includegraphics[trim=0mm 0mm 0mm 0mm, width=.3\linewidth]{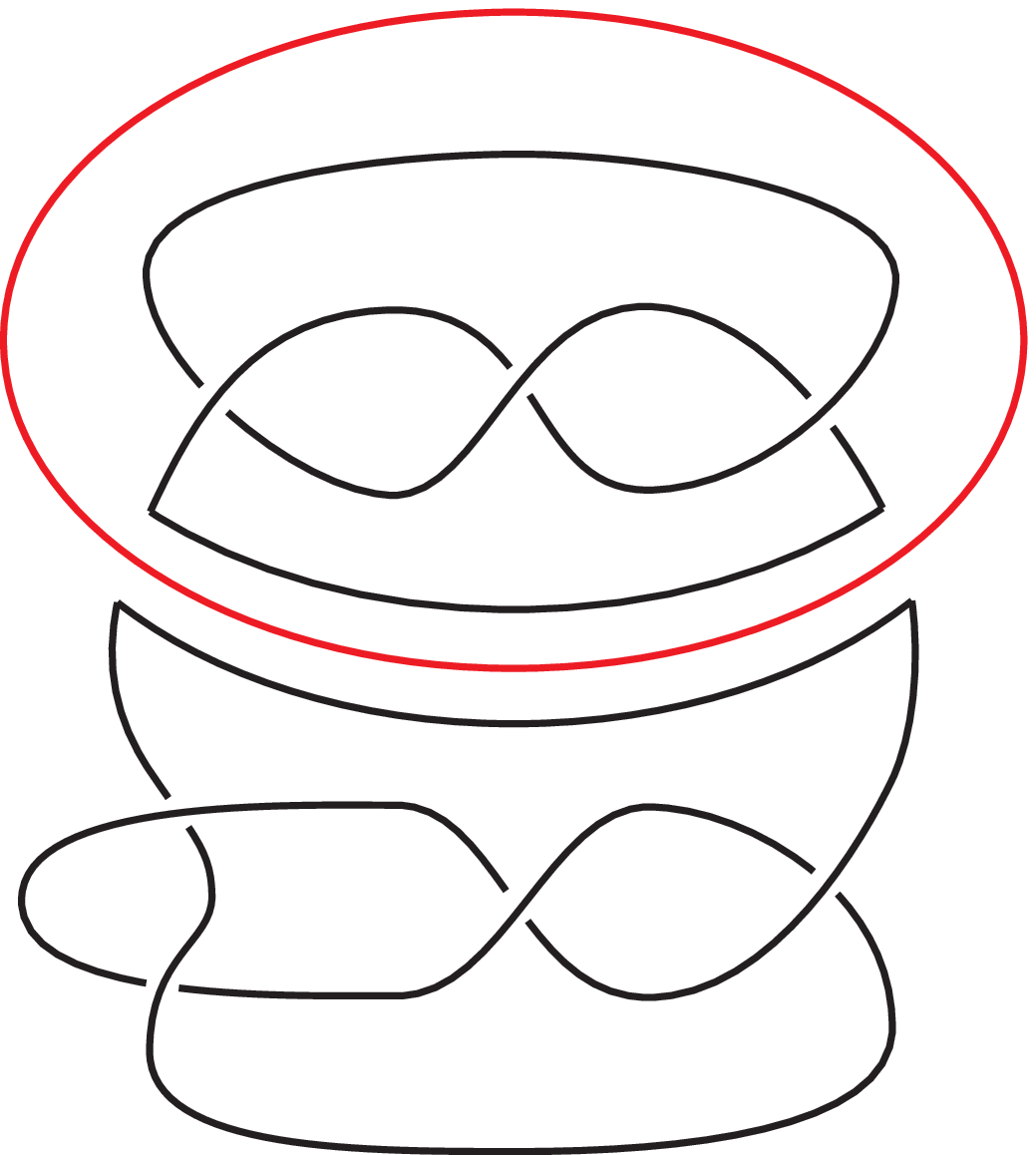}\\
	$(D,\alpha,s)$ & $D_1 \cup D_2$
	\end{tabular}
	\end{center}
	\caption{An example of the triple $(D,\alpha,s)$ inducing a pair of diagrams $D_1$  and $D_2$. The red dot is $s$ and specifies the arc along which the diagram is surgered.}
	\label{diagramconnectedsum}
\end{figure}


\begin{theorem}
\label{add}
Given a composite diagram $D:=D_1\# D_2$, the inequality $b_{\perp}(D)\geq b_{\perp}(D_1)+b_{\perp}(D_2)-1$ holds.
\end{theorem}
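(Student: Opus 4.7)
The plan is to prove the inequality geometrically, using the embedding characterization of $b_{\perp}$. I take an embedding $\gamma$ presenting $D$ that realizes $b_{\perp}(D)=m$, and construct embeddings $\gamma_1, \gamma_2$ presenting $D_1, D_2$ whose combined number of maxima is at most $m+1$.

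First, since $\alpha$ meets $D$ transversally at two points $p, q$, the preimage $\gamma \cap p^{-1}(\alpha)$ consists of exactly two points, which I denote $\tilde{p}$ and $\tilde{q}$. After a small perturbation (not affecting the number of maxima of $h|_{\gamma}$), I may assume $\tilde p$ and $\tilde q$ are regular points of $h|_{\gamma}$ with distinct heights; without loss of generality $h(\tilde p) < h(\tilde q)$. The pair $\{\tilde p, \tilde q\}$ divides $\gamma$ into two sub-arcs $\gamma^{\mathrm{in}}$ (projecting inside $\alpha$) and $\gamma^{\mathrm{out}}$ (projecting outside). I then close $\gamma^{\mathrm{in}}$ by a smooth arc $\beta_1 \subset \mathbb{R}^3$ that projects onto the arc of $\alpha$ used to close off $D_1$, runs from $\tilde q$ to $\tilde p$, and is \emph{strictly monotone decreasing} in height; symmetrically, I close $\gamma^{\mathrm{out}}$ by $\beta_2$ over the arc of $\alpha$ used to close off $D_2$, running from $\tilde p$ to $\tilde q$ and \emph{strictly monotone increasing}. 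The closed curves $\gamma_1 := \gamma^{\mathrm{in}} \cup \beta_1$ and $\gamma_2 := \gamma^{\mathrm{out}} \cup \beta_2$ are embedded (since the neck arcs lie on $\alpha$ and so avoid the rest of $D$ in projection) and present $D_1, D_2$ after a small smoothing of the concatenation corners.

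The crucial step is counting maxima. Each $\beta_i$ is monotone and so contributes no interior critical points to $h|_{\gamma_i}$; any new critical points therefore arise only at the four junctions where $\beta_i$ meets $\gamma^{\mathrm{in}}$ or $\gamma^{\mathrm{out}}$. A short case analysis in the sign of the derivative of $h|_{\gamma}$ at $\tilde p$ and at $\tilde q$ shows that at $\tilde p$ exactly one of $\gamma_1, \gamma_2$ picks up a new local minimum, namely the one whose closing-arc slope has the opposite sign to $h|_{\gamma}$'s slope at $\tilde p$; symmetrically at $\tilde q$ exactly one of $\gamma_1, \gamma_2$ picks up a new local maximum; and the other $\gamma_i$ is smooth and regular at that junction. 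Thus
\[
M(h|_{\gamma_1}) + M(h|_{\gamma_2}) \;=\; M(h|_{\gamma}) + 1,
\]
where $M(\cdot)$ denotes the number of maxima. Combining with the bounds $b_{\perp}(D_i) \le M(h|_{\gamma_i})$ (since $\gamma_i$ presents $D_i$) gives $b_{\perp}(D_1) + b_{\perp}(D_2) \le b_{\perp}(D) + 1$, which is the theorem.

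The main obstacle is the junction-counting: one must verify that the opposite-monotone choice of $\beta_1, \beta_2$ yields exactly one extra extremum of each type across all four possible slope configurations of $h|_{\gamma}$ at $\tilde p, \tilde q$, never zero and never two. This rigidity, forced by the fact that each closed curve $\gamma_i$ must have equal numbers of maxima and minima, is what pins the additive defect in the theorem to exactly $-1$.
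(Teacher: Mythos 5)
Your proof is correct and follows essentially the same route as the paper's: cut $\gamma$ at the two points of $\gamma\cap p^{-1}(\alpha)$ and close each half with a monotone arc lying over $\alpha$, then observe that exactly one new maximum (at the higher junction) and one new minimum (at the lower junction) appear regardless of the slopes of $h|_{\gamma}$ there. The paper phrases the construction as surgery along a single monotone arc $\hat{\alpha}\subset p^{-1}(\alpha)$ with framing parallel to the projection plane, but this is the same construction and the same count.
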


\begin{proof}
Let $(D,\alpha,s)$ be the triple that gives rise to the decomposition $D=D_1\# D_2$.
 Let $\gamma\in K$ such that $\gamma$ presents $D$ and $h|_{\gamma}$ has $b_{\perp}(D)$ maxima. 
After a small perturbation of $\alpha$ that is transverse to $p(K)$, we can assume that 
$A=p^{-1}(\alpha)$ is disjoint from the critical points of $h|_{\gamma}$ and $A\cap \gamma =\{x,y\}$
 such that $h(x)<h(y)$. Let $\hat{\alpha}$ be a monotone increasing (wrt the $z$ coordinate) arc embedded in $A$ connecting $x$
 to $y$ that is mapped by $p$ homeomorphically onto a sub arc of $\alpha$. Surgering $\gamma$ along
 $\hat{\alpha}$ with framing parallel to the plane of projection results in two knot embeddings $\gamma_1$ and $\gamma_2$ such that $\gamma_1$
 presents $D_1$ and $\gamma_2$ presents $D_2$.  Independent of the sign of the derivative of 
$h|_{\gamma}$ at $x$ and $y$, $h|_{\gamma_1\cup \gamma_2}$ has exactly one more maxima than 
$h|_{\gamma}$. See Figure \ref{4_1}.

\begin{figure}[htbp]
	\begin{center}
	\includegraphics[trim=0mm 0mm 0mm 0mm, width=.6\linewidth]{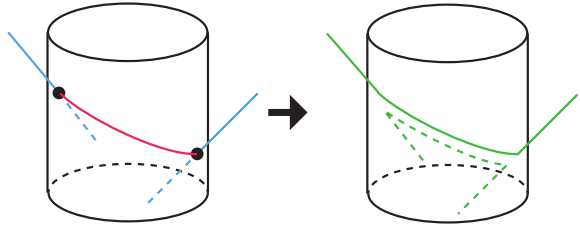}
	\end{center}
	\caption{Surgering $\gamma$ along $\hat{\alpha}$}
	\label{4_1}
\end{figure}

Since the number of maxima of $h|_{\gamma_1\cup \gamma_2}$ 
is bounded below by $b_{\perp}(D_1)+b_{\perp}(D_2)$, then 
$b_{\perp}(D)\geq b_{\perp}(D_1)+b_{\perp}(D_2)-1$.
\end{proof}

\begin{corollary} Given a composite diagram $D:=D_1\# D_2$, the inequality $\omega(D)\geq \omega(D_1)+ \omega(D_2)-1$ holds.
\end{corollary}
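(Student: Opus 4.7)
The plan is to deduce this corollary as an immediate consequence of the two main results already established: Theorem~\ref{omega=perp}, which gives $\omega(D)=b_{\perp}(D)$ for every knot diagram $D$, and Theorem~\ref{add}, which gives the analogous superadditivity for $b_{\perp}$ under diagrammatic connected sum.

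Concretely, I would first apply Theorem~\ref{omega=perp} three times, once each to $D$, $D_1$, and $D_2$, to rewrite every Wirtinger number as a perpendicular bridge number. Then I would invoke Theorem~\ref{add} to replace the right-hand side, obtaining
\[
\omega(D)=b_{\perp}(D)\;\geq\; b_{\perp}(D_1)+b_{\perp}(D_2)-1 \;=\; \omega(D_1)+\omega(D_2)-1,
\]
which is the desired inequality.

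There is essentially no obstacle here: all of the geometric work (the surgery of $\gamma$ along $\hat{\alpha}$ and the careful accounting of maxima) has already been carried out in the proof of Theorem~\ref{add}, and the combinatorial-to-geometric bridge is supplied by Theorem~\ref{omega=perp}. The only subtlety worth double-checking is that Theorem~\ref{omega=perp} applies to each of $D$, $D_1$, $D_2$ without any primality hypothesis, which it does, since its statement is for arbitrary knot diagrams. Hence the corollary is a one-line consequence and no additional lemmas are needed.
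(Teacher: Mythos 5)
Your proposal is correct and matches the paper's proof exactly: the paper also derives the corollary by substituting $\omega = b_{\perp}$ (Theorem~\ref{omega=perp}) into the superadditivity inequality of Theorem~\ref{add}. No further comment is needed.
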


\begin{proof} By Theorem~\ref{omega=perp}, for any knot diagram $D$, $\omega(D) = b_{\perp}(D)$.
\end{proof}

\begin{corollary} 
\label{unknot-gap}
Given any integer $n$, there exists a diagram $D_n$ of the unknot such that $b_{\perp}(D)=\omega(D) > n$. 
\end{corollary}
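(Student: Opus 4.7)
The plan is to iterate the super-additivity of $\omega$ under diagrammatic connected sum (the corollary immediately preceding this statement): $\omega(D_1 \# D_2) \geq \omega(D_1) + \omega(D_2) - 1$. Given a seed diagram $E$ of the unknot with $\omega(E) \geq 2$, I will set $D_n := E \# E \# \cdots \# E$, the $n$-fold diagrammatic connected sum formed as in Figure~\ref{diagramconnectedsum}. Since each summand represents the unknot and the connected sum of unknots is the unknot, $D_n$ is a diagram of the unknot. Iterating the super-additivity $(n-1)$ times yields
\[
\omega(D_n) \;\geq\; n\,\omega(E) - (n-1) \;\geq\; 2n - (n-1) \;=\; n+1 \;>\; n,
\]
and Theorem~\ref{omega=perp} then gives $b_{\perp}(D_n) = \omega(D_n) > n$, as required.

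The main obstacle is producing the seed $E$. Small unknot diagrams built from the trivial circle via Reidemeister I or II moves are readily seen to be $1$-colorable: one can start at a strand that serves simultaneously as the over-strand and as an under-endpoint at the same crossing, and propagate a single color throughout. Such diagrams therefore do not suffice. I would instead appeal to a classical \emph{hard} unknot diagram --- one that cannot be reduced to the trivial circle by Reidemeister moves without first increasing the crossing number, such as Goeritz's or Kauffman's culprits --- and verify $\omega(E) \geq 2$ by a finite combinatorial check: for every strand $s$ of $E$, I must confirm that no sequence of coloring moves applied to the single-strand coloring $(\{s\}, f(s) = 1)$ colors the entire diagram. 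This verification, while elementary for any fixed example, is diagram-specific and constitutes the core of the argument; once it is completed for a single $E$, the connected-sum construction above immediately produces the required family $\{D_n\}_{n\in \mathbb{Z}}$ and proves the corollary (together with the trivial case of small $n$, handled directly by $D_n = E$).
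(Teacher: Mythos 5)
Your argument is essentially the paper's: seed the construction with a single unknot diagram of Wirtinger number at least $2$ and iterate the super-additivity of $\omega$ under diagrammatic connected sum. The only difference is that the paper names a concrete seed (the Thistlethwaite unknot, for which it asserts $\omega=3$ via exactly the kind of finite coloring check you describe), whereas you defer that verification for Goeritz's or Kauffman's example --- carrying it out for one explicit diagram is the only step left to supply, since ``hardness'' with respect to Reidemeister moves does not by itself guarantee $\omega\geq 2$.
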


\begin{proof} Let $D$ denote the Thistlethwaite diagram of the unknot, pictured in Figure~\ref{Thistlethwaite}. By direct computation, we find that $\omega(D) =3$. (This can be done by hand. We show $\omega(D) >2$ by verifying that, if we begin by coloring the over-strand and  one under-strand at any crossing, the coloring process terminates before all strands are colored, regardless of the choice of crossing. By contrast, it is not hard to find three strands which, when colored, allow us to extend the coloring to the entire diagram, by iterating the coloring move.) Let $D_0=D_1=D$ and let $D_n$ be a connected sum of $n$ copies of $D$. By repeated application of Theorem~\ref{add}, we have $\omega(D_n) = b_{\perp}(D_n) \geq 3 + 2(n-1) = 2n+1 > n$.
\end{proof}

\begin{figure}[htbp]
	\begin{center}
	\includegraphics[trim=0mm 0mm 0mm 0mm, width=.4\linewidth]{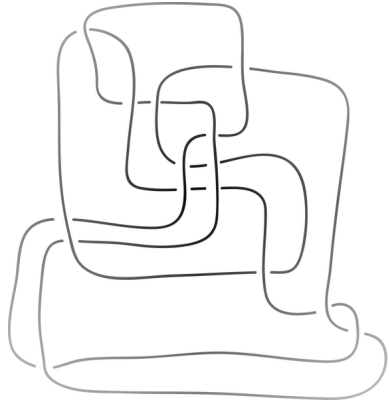}
	\end{center}
	\caption{The Thistlethwaite unknot, whose Wirtinger number is 3.}
	\label{Thistlethwaite}
\end{figure}

This allows us to construct knot diagrams for which the gap between the Wirtinger number and the bridge number of the knot is arbitrarily large.

\begin{corollary}
\label{anyknot-gap}
 For every positive integer $m$ and every knot $K$, there exists a diagram $D$ of $K$ such that $b_{\perp}(D)=\omega(D)> \beta(K) + m$. 
\end{corollary}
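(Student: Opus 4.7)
The plan is to bootstrap off Corollary~\ref{unknot-gap} by taking a connected sum of a fixed diagram of $K$ with a suitably large diagram of the unknot whose Wirtinger number is already huge, and then applying the superadditivity result (Theorem~\ref{add}/its corollary) to bound the Wirtinger number of the combined diagram from below.

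More precisely, fix a knot $K$ and a positive integer $m$. Choose any diagram $D_K$ of $K$; for concreteness we may take $D_K$ to be a bridge-minimizing diagram, so $\omega(D_K) \geq \beta(K)$. Next choose an integer $n$ large enough that $2n > m$, and let $D_n$ denote the $n$-fold connected sum of the Thistlethwaite unknot diagram $D$ with itself, as in the proof of Corollary~\ref{unknot-gap}. Repeated application of Theorem~\ref{add} (telescoping the inequality $\omega(D'\# D'') \geq \omega(D')+\omega(D'')-1$) gives $\omega(D_n) \geq 2n+1$.

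Now form $D := D_K \# D_n$. Since $D_n$ is a diagram of the unknot, $D$ is again a diagram of $K$, so it is a legitimate candidate. Applying Theorem~\ref{add} once more yields
\[
\omega(D) \;=\; b_{\perp}(D) \;\geq\; \omega(D_K) + \omega(D_n) - 1 \;\geq\; \beta(K) + (2n+1) - 1 \;=\; \beta(K) + 2n \;>\; \beta(K) + m,
\]
which is exactly the desired inequality. The equality $\omega(D) = b_{\perp}(D)$ is immediate from Theorem~\ref{omega=perp}.

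There is essentially no obstacle here: all the heavy lifting has already been done in the previous results. The one minor thing to verify is that the connected sum $D_K \# D_n$ is a valid diagram of $K$ (which follows from the fact that $D_n$ presents the unknot and diagrammatic connected sum realizes the connected sum of knots), and that Theorem~\ref{add} can be iterated, which is routine since the output of a diagrammatic connected sum is itself a diagram to which the theorem applies again.
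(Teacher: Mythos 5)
Your proposal is correct and follows essentially the same route as the paper: form the diagrammatic connected sum of a diagram of $K$ with a high-Wirtinger-number unknot diagram from Corollary~\ref{unknot-gap}, then apply the superadditivity of Theorem~\ref{add} together with $\omega(D)=b_{\perp}(D)$. The only cosmetic difference is your choice of $n$ with $2n>m$ versus the paper's $n=m+1$; both yield the required inequality.
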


\begin{proof} Let $D_1$ be any diagram of $K$, and $D_{n}$ a diagram of the unknot satisfying the conclusion of Corollary~\ref{unknot-gap} for $n=m+1$.  Let  $D:=D_1\# D_n$. By Theorem~\ref{add}, $D$ is a diagram of $K$ such that 
$$\omega(D) \geq \omega(D_1) + \omega(D_n) - 1\geq \omega(K) + \omega(D_n) -1 > \omega(K) + n-1$$
$$ = \omega(K) + m=\beta(K)+m.$$
\end{proof}

Note, however, that the diagram constructed in the proof of Corollary~\ref{anyknot-gap} has a very large number of crossings, exceeding the crossing number of $K$ by at least $15n$. A more subtle question is how big the gap between $\omega(D)$ and $\omega(K)$ can be for $D$ a {\it minimal} diagram of the knot $K$. We demonstrate that this gap can be arbitrarily large as well.


\begin{proof} [Proof of Theorem~\ref{minmal-gap}]
In Figure \ref{example} we give an example of a knot $K$ with reduced alternating diagram $D$ such that $\omega(K)=\beta(K)=5$ and $\omega(D)=6$. The fact that $\omega(D)=6$ was established by computer computation using the algorithm introduced in \cite{BKVV}. By Schubert's equality for bridge number, $\omega(\#_{i=1}^{n} K)= 5n-(n-1)$. Similarly, by Theorem \ref{add}, $\omega(\#_{i=1}^{n} D)\geq 6n-(n-1)$. Hence, $\omega(\#_{i=1}^{n} D)-\omega(\#_{i=1}^{n} K)\geq n$.
\end{proof}

\begin{figure}[htbp]
	\begin{center}
	\includegraphics[trim=0mm 0mm 0mm 0mm, width=.28\linewidth]{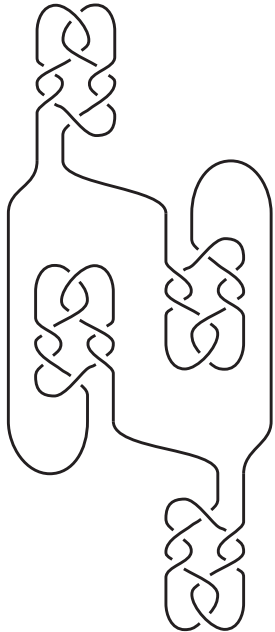}
	\end{center}
	\caption{A minimal crossing diagram $D$ of a composite knot $K$ such that $\omega(D)=6$ and $\beta(K)=5$. Additionally, $b_{||}(D)=7$.}
	\label{example}
\end{figure}

Having seen that the gap between the bridge number of a knot $K$ and diagrammatic bridge number in a minimal diagram of $K$ can be arbitrarily large for composite knots, we naturally ask whether minimal diagrams of {\it prime} knots always realize bridge number. We answer this question in the negative.

\begin{theorem} There exists a prime knot $K$ with  $\mathcal{C}(K)\setminus\mathcal{B}(K)\ne\emptyset$.
\end{theorem}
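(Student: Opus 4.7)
The plan is to exhibit an explicit prime knot $K$ together with a specific minimal crossing diagram $D$ of $K$ for which $\omega(D) > \beta(K)$, and then deduce the theorem from Theorem~\ref{omega=perp}. By that theorem, $b_\perp(D) = \omega(D) > \beta(K)$, which means no embedding presenting $D$ realizes the bridge number of $K$, so $D \in \mathcal{C}(K) \setminus \mathcal{B}(K)$ provided $D$ is also crossing-number-minimizing.

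First, I would select a candidate $K$ from the knot tables, preferably a prime alternating knot of moderate crossing number whose bridge number is already tabulated, and choose a reduced alternating diagram $D$ of $K$. By the Kauffman--Murasugi--Thistlethwaite theorem, any reduced alternating diagram of a prime alternating knot is crossing-number-minimizing, so $D \in \mathcal{C}(K)$ as soon as we verify $D$ is reduced and $K$ is prime. Primality of $K$ can be read off directly from the tabulation, or alternatively obtained from diagrammatic primality of $D$ via Menasco's theorem for alternating knots.

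Next, I would run the Wirtinger-coloring algorithm of~\cite{BKVV} on $D$ to obtain $\omega(D)$ explicitly. The key inequality to check is $\omega(D) > \beta(K)$. To certify $\beta(K)$ from above independently of $D$, I would exhibit another diagram $D'$ of $K$ with $\omega(D') = \beta(K)$; since the minimum of $\omega$ over all diagrams equals the bridge number, and since bridge numbers for knots in this range are known from the tabulation, this matches the classical lower bound on $\beta(K)$ and pins down its value.

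The main obstacle is finding the example itself: most low-crossing prime knots already satisfy $\omega(D) = \beta(K)$ on their minimal diagrams (indeed, this was experimentally confirmed for more than $450{,}000$ prime knots up to $16$ crossings, as noted earlier in the paper), so the example must live higher in the tables or be constructed ad hoc. A promising construction is to take the composite minimal diagram from Theorem~\ref{minmal-gap} and modify it by local replacements (for instance, a well-chosen tangle surgery across the decomposing arc) that destroy the visible connected-sum sphere while preserving both the alternating structure, the crossing count, and the obstruction to $2$-coloring which forced $\omega(D) > \beta(K)$; diagrammatic primality of the result is then verified directly and upgraded to primality of $K$ via Menasco, while the Wirtinger number is again computed by the algorithm of~\cite{BKVV}.
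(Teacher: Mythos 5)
Your deductive skeleton matches the paper's: exhibit a prime knot $K$ with a crossing-number-minimizing diagram $D$ satisfying $\omega(D)>\beta(K)$, invoke Theorem~\ref{omega=perp} to get $b_\perp(D)>\beta(K)$, and conclude $D\in\mathcal{C}(K)\setminus\mathcal{B}(K)$. The problem is that for an existence theorem of this kind the entire mathematical content is the example itself, and your proposal never produces one --- it is a search strategy, with the candidate left unspecified and the key inequality $\omega(D)>\beta(K)$ left to be ``checked'' on a knot you have not named. That is a genuine gap, not a fillable routine step: as you yourself observe, the inequality fails for every minimal diagram tested among $450{,}000$ prime knots up to $16$ crossings, so there is no reason to believe a table search or a generic perturbation will terminate.

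Worse, the specific place you propose to look is the one place the authors believe the example cannot live. You restrict attention to prime \emph{alternating} knots with reduced alternating diagrams (so that Kauffman--Murasugi--Thistlethwaite gives minimality and Menasco gives primality), but the paper explicitly conjectures that every minimal diagram of a prime alternating knot satisfies $b(D)=\beta(K)$. The paper's actual example is non-alternating: an index-two cable of the trefoil, whose diagram is shown to be minimal because it is \emph{adequate} (citing Thistlethwaite), whose primality is established by an annulus/essential-surface argument rather than by Menasco, and whose bridge number is pinned down as $4$ by combining Schubert's lower bound for cables with an explicit four-bridge diagram. Your fallback idea --- a tangle surgery on the composite example of Theorem~\ref{minmal-gap} that destroys the connected-sum sphere while preserving the alternating structure, the crossing count, and the coloring obstruction --- is speculative; there is no argument that such a surgery exists or that it preserves $\omega$, and if it kept the diagram alternating it would again collide with the conjecture. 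One smaller point in your favor: your method of certifying $\beta(K)$ from above by exhibiting a second diagram $D'$ with small $\omega(D')$ is sound and is essentially what the paper does with its Figure of a four-bridge minimal diagram, though you still need an independent \emph{lower} bound on $\beta(K)$ (the paper gets it from the cabling structure) to know the value exactly.
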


\begin{proof} Let $K_0$ be the knot in Figure~\ref{example2}, and denote the diagram depicted by $D_0$. Using the algorithm described in~\cite{BKVV}, we calculated the Wirtinger number of this diagram and found that $\omega(D_0)=6$. Moreover, it is straight-forward to verify that the diagram $D_0$ is adequate and thus crossing number minimizing by \cite{Th88}. Due to a somewhat lengthy, but standard argument of analyzing the possible intersections between an essential meridional annulus and the various essential surfaces in the exterior $K_0$, one can show that $K_0$ is prime, see for example Lemma 6.0.29 of \cite{BThesis}. It is similarly straight-forward to show that $K_0$ is an index two cable of the trefoil knot. Hence, by \cite{Schu54}, $\beta(K_0)\geq 4$. However, Figure \ref{example3}, illustrates that $\beta(K_0)\leq4$.
\end{proof}



We believe the method used to construct the knot $K_0$ can be generalized to find other minimal diagrams $D_i$ of prime non-alternating knots $K_i$, such that $b(D_i)>\beta(K_i)$ and the gap between $b(D_i)$ and $\beta(K_i)$ grows with the crossing number of $D_i$.  It is interesting to note that there exists a minimal crossing diagram for $K_0$, given in Figure \ref{example3}, that realizes bridge number.

\begin{figure}[htbp]
	\begin{center}
	\includegraphics[trim=0mm 0mm 0mm 0mm, width=.38\linewidth]{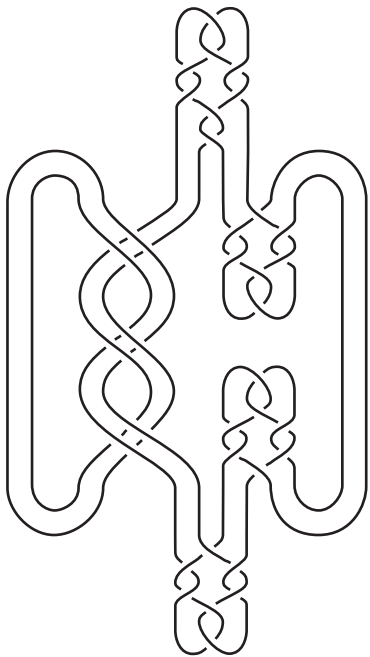}
	\end{center}
	\caption{A minimal crossing diagram $D$ of a prime knot $K$ such that $\omega(D)=6$ and $\beta(K)=4$.}
	\label{example2}
\end{figure}

\begin{figure}[htbp]
	\begin{center}
	\includegraphics[trim=0mm 0mm 0mm 0mm, width=.25\linewidth]{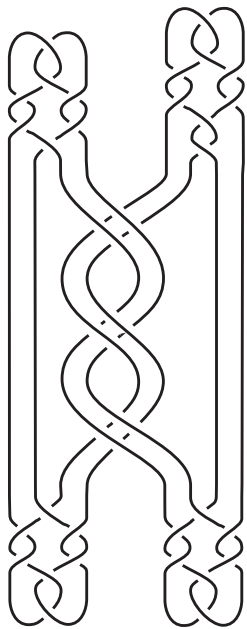}
	\end{center}
	\caption{A minimal crossing diagram $D$ of a prime knot $K$ such that $w(D)=\beta(K)=4$.}
	\label{example3}
\end{figure}






\end{document}